\newcommand{\La}{\Lambda}
\newcommand{\As}{\mathcal{A}_{\mathrm{s}}}
\newcommand{\Aw}{\mathcal{A}_{\mathrm{w}}}
\newcommand{\A}{\mathcal{A}}
\newcommand{\ds}{\mathrm{d}_{\mathrm{s}}}
\newcommand{\dw}{\mathrm{d}_{\mathrm{w}}}
\newcommand{\Ec}{\mathcal{E}}
\newcommand{\Xw}{X_{\mathrm{w}}}
\newcommand{\db}{\mathrm{d}_{\bullet}}
\newcommand{\dd}{\,d}
\newcommand{\bg}{\begin{equation}}
\newcommand{\ed}{\end{equation}}
\newcommand{\bga}{\begin{eqnarray}}
\newcommand{\eda}{\end{eqnarray}}
\def\cbdu{\par{\raggedleft$\Box$\par}}
\newtheorem {Theorem}  {Theorem}
\numberwithin{Theorem}{section}
\newtheorem {Lemma}[Theorem]  {Lemma}
\theoremstyle{definition}
\newtheorem{Definition}[Theorem]{Definition}
\theoremstyle{remark}
\chardef\csname pre amssym.def
\def\undefine#1{\let#1\undefined}
\def\newsymbol#1#2#3#4#5{\let\next@\relax
 \ifnum#2=\@ne\let\next@\msafam@\else
 \ifnum#2=\tw@\let\next@\msbfam@\fi\fi
 \mathchardef#1="#3\next@#4#5}
\def\mathhexbox@#1#2#3{\relax
 \ifmmode\mathpalette{}{\m@th\mathchar"#1#2#3}%
 \else\leavevmode\hbox{$\m@th\mathchar"#1#2#3$}\fi}
\def\hexnumber@#1{\ifcase#1 0\or 1\or 2\or 3\or 4\or 5\or 6\or 7\or 8\or
 9\or A\or B\or C\or D\or E\or F\fi}
\font\teneufm=eufm10 \font\seveneufm=eufm7 \font\fiveeufm=eufm5
\newcounter{remark}
\newcommand{\e}{\epsilon}
\renewcommand{\l}{\lambda}
\renewcommand{\th}{\theta}
\newcommand{\R}{\mathbf{R}}
\def  \R   {{\mathbb R}}
\def  \Z   {{\mathbb Z}}
\def  \T   {{\mathbb T}}
\def  \12  {{\frac{1}{2}}}
\def  \l   {\langle}
\def\build#1_#2^#3{\mathrel{\mathop{\nuern 0pt#1}\limits_{#2}^{#3}}}
\begin{document}

\title[Global Attractor of SQG in $L^2$]{The existence of a global attractor for the forced critical surface quasi-geostrophic Equation in $L^2$}

\author[Alexey Cheskidov]{ Alexey Cheskidov }
\address{Department of Mathematics, University of Illinois, Chicago, IL 60607,USA}
\email{acheskid@uic.edu}

\author[Mimi Dai]{ Mimi Dai}
\address{Department of Mathematics, University of Illinois, Chicago, IL 60607,USA}
\email{mdai@uic.edu}

\thanks{The work of Alexey Cheskidov was partially supported by NSF Grant DMS-1108864}

\begin{abstract}
We prove that the critical surface quasi-geostrophic equation driven by a force $f$ possesses a compact global attractor in $L^2(\mathbb T^2)$ provided
$f\in L^p(\mathbb T^2)$ for some $p>2$. First, the De Giorgi method is
used to obtain uniform $L^\infty$ estimates on viscosity solutions. Even though
this does not provide  a compact absorbing set, the existence
of a compact global attractor follows from the continuity of solutions, which is
obtained by estimating the energy flux using the Littlewood-Paley decomposition.

\end{abstract}

\maketitle

\section{Introduction}
We consider the two dimensional critical surface quasi-feostrophic (SQG) equation

\begin{equation}\begin{split}\label{QG}
\frac{\partial\theta}{\partial t}+u\cdot\nabla \theta+\nu\Lambda\theta =f,\\
u=R^\perp\theta,
\end{split}
\end{equation}
on the torus $\mathbb T^2=[0,L]^2$, where $\nu>0$, $\Lambda=\sqrt{-\Delta}$ is the Zygmund operator, and
\bg\notag
R^\perp\theta=\Lambda^{-1}(-\partial_2\theta,\partial_1\theta).
\ed
The scalar function $\theta$ represents the potential temperature and the vector function $u$ represents the fluid velocity. 
The initial data $\th(0) \in L^2(\mathbb{T}^2)$ and the force $f \in L^p(\mathbb{T}^2)$ for some $p>2$ are assumed to have zero average.
Equation (\ref{QG}) describes the evolution of the surface temperature field in a rapidly rotating and stably stratified fluid with potential velocity \cite{CMT}. As pointed out in \cite{CMT}, this equation attracts interest of  scientists and mathematicians due to two major reasons. First, it is a fundamental model for the actual geophysical flows with applications in atmosphere and oceanography study. Second, from the mathematical point of view, the behavior of strongly nonlinear solutions to (\ref{QG}) with $\nu=0, f=0$ in 2D and the behavior of potentially singular solutions to the Euler's equation in 3D are strikingly analogous which has been justified both analytically and numerically. For literature the readers are refereed to \cite{CCW, CMT, CW, Pe} and the references therein. 

Equation (\ref{QG}) is usually referred as the critical SQG \cite{CMT}, since the highest controlled norm is scaling invariant. However, it is not known whether a dramatic change in the behavior of solutions occurs when the dissipation power crosses $1$. The global regularity problem of the critical SQG equation has been very challenging due to the balance of the nonlinear term and the dissipative term. In the unforced case this problem has been resolved by Kieslev, Nazarov and Volberg \cite{KNV}, Caffarelli and Vasseur \cite{CaV}, Kieslev and Nazarov \cite{KN09} and Constantin and Vicol \cite{CV} independently, using different sophisticated methods. The main ingredient of the proof of global regularity in \cite{KNV} relies on constructing a special family of Lipschitz moduli of continuity that are preserved by the dissipative evolution. The proof of \cite{CaV} applies the ideas of De Giorgi iteration method to the nonlocal parabolic equation. As a first step, the authors make use of the interplay between $|\Lambda ^{\frac 12}\theta|$ and $|\theta|$ to prove that a weak solution in $L^2$ is bounded in $L^\infty$. The second step uses a more delicate analysis to show that such a solution is H\"older continuous. Later, to find a bridge between the proofs of \cite{KNV} and \cite{CaV}, Kieslev and Nazarov \cite{KN09} reproved the global regularity using a completely different method which applied elementary tools to control the H\"older norms by choosing a suitable family of test functions (Hardy molecules). Very recently, Constantin and 
Vicol \cite{CV} proposed another proof of the global regularity which provided a transparent way to see that the dissipation is dominating the nonlinear term in the critical SQG equation. The main tool is a nonlinear maximum principle which introduces nonlinear lower bounds for the linear nonlocal operator $\Lambda$.
Applying the method introduced in \cite{KNV}, global well-posdeness and a decay estimate are established for the critical dissipative SQG equation in the whole space in \cite{DD}, global regularity is obtained for the critical SQG equation with smooth forcing term and initial data in \cite{FPV} and for the equation with a linear dispersive force and sufficiently smooth initial data in \cite{KN10}, 
and H\"older continuity is obtained for critical linear drift-diffusion equations in \cite{SV}.

There have also been a few results regarding the long time behavior of the solutions to the critical SQG equation \cite{CTVan, CTV, Don, SS03, SS05}. In the unforced case, the decay rate and lower bound of decay rates have been obtained for mild solutions in \cite{SS05}; the decay estimates have been established for regular solutions in \cite{Don}. In the forced case, decay rate has been obtained for weak solutions, provided the force is time dependent and satisfies certain decay assumptions \cite{SS03}. In the forced case with a special class of time independent force, the long time average behavior of viscosity solutions has been addressed and the absence of anomalous dissipation is obtained by Constantin, Tarfulea, and Vicol in \cite{CTVan}. Recently the authors also studied the long time dynamics of regular solutions of the forced critical SQG based on their new proof of regularity \cite{CTV}. This proof is described to be dynamic in the sense that the H\"older norm of a solution is shown to be dependent
only on the force for
large enough time. With the assumption that the time independent force $f\in L^\infty(\mathbb T^2)\cap H^1(\mathbb T^2)$ and the initial data in $H^1(\mathbb T^2)$, the authors proved the existence of a compact attractor. It is a global attractor in the
classical sense in $H^s(\mathbb T^2)$ for $s\in(1,3/2)$, and it attracts all the points (but not bounded sets) in $H^1(\mathbb T^2)$. 
Moreover, the authors proved that the attractor has a finite box-counting dimension.

In the present paper we prove that the critical SQG equation (\ref{QG}) possesses a global attractor in
$L^2(\mathbb T^2)$, provided the force $f$ is solely in $L^p(\mathbb T^2)$ for $p>2$. As the first step, it is established that for any initial data in $L^2(\mathbb T^2)$ a weak (viscosity) solution is bounded in $L^\infty$ on any interval $[t_0, \infty)$, $t_0>0$. The main tool is an application of the De Giorgi iteration method to the forced critical SQG as it was done by Caffarelli and Vasseur in \cite{CaV} in the unforced case. This is the only 
part that requires the force to be in $L^p$ for some $p>2$. Second, in the spirit of Cheskidov, Constantin, Friedlander, and Shvydkoy 
result \cite{CCFS} on Onsager's conjecture, the Littlewood-Paley decomposition technique is used to show that bounded weak solutions have zero energy flux and hence satisfy the energy equality. The energy equality immediately implies the continuity of weak solutions in $L^2(\mathbb T^2)$. In the third step, we follow an abstract framework of evolutionary systems introduced by Cheskidov and Foias \cite{CF} to show the existence of a weak
global attractor. Finally, with all the above ingredients at hand, we are able to apply a result established in \cite{C5} by Cheskidov to prove that the
weak global attractor is in fact a strongly compact strong global attractor. Namely, we prove the following.


\begin{Theorem}\label{thm:attractor-intro}
Assume $f\in L^p(\mathbb T^2)$ with $p>2$. Then the critical SQG equation (\ref{QG}) possesses a compact global attractor $\A$ in
$L^2(\mathbb T^2)$,
\[
\A=\{ \theta_0: \ \theta_0=\theta(0) \mbox{ for some bounded complete (ancient) viscosity solution } \theta(t) \}.
\] 
In addition, for any bounded set $B \subset L^2(\mathbb T^2)$, $\epsilon >0$, and $T>0$, there exists $t_0$,
such that for any $t^*>t_0$,  every viscosity solution $\theta(t)$ with $\th(0) \in B$
satisfies
\[
\|\theta(t)- v(t)\|_{L^2} < \epsilon, \qquad \forall t\in [t^*,t^*+T],
\]
for some complete trajectory $v(t)$ on the global attractor ($v(t) \in \A \ \forall \ t \in(-\infty, \infty)$).

\end{Theorem}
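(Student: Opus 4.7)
The plan is to implement the four-step programme sketched at the end of the introduction.

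\textbf{Step 1: $L^\infty$ smoothing of viscosity solutions.} Testing (\ref{QG}) against $\theta$, together with the cancellation $\int (u\cdot\nabla\theta)\theta\,dx = 0$ and the Poincaré inequality on mean-zero functions, produces a $L^2$ absorbing ball $X$. To upgrade from $L^2$ to $L^\infty$ I would follow the De Giorgi iteration of Caffarelli-Vasseur \cite{CaV} applied to the level-set truncations $\theta_k=(\theta-M(1-2^{-k}))_+$ together with a geometric sequence of times $T_k\uparrow t_0$. The only new ingredient compared to the unforced case is the contribution $\int f\,\theta_k\,dx$ of the forcing, which by Hölder together with the Sobolev embedding $\dot H^{1/2}(\mathbb T^2)\hookrightarrow L^q(\mathbb T^2)$ can be absorbed into the nonlinear iteration quantity exactly when $f\in L^p(\mathbb T^2)$ for some $p>2$. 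The outcome is the uniform smoothing estimate
\[
\sup_{t\ge t_0}\|\theta(t)\|_{L^\infty}\le K(\|f\|_{L^p},\|\theta(0)\|_{L^2},t_0), \qquad t_0>0.
\]

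\textbf{Step 2: Energy equality and $L^2$-continuity.} Once $\theta\in L^\infty_{t,x}$ after time $t_0$, I would estimate the dyadic energy flux
\[
\Pi_q := \int_{\mathbb T^2}\bigl(u\cdot\nabla\theta_{\le q}\bigr)\,\theta_{\le q}\,dx
\]
by Littlewood-Paley commutator arguments in the spirit of Cheskidov-Constantin-Friedlander-Shvydkoy \cite{CCFS}. The $L^\infty$ bound on $\theta$ combined with the $L^2_t\dot H^{1/2}$ control from the basic energy inequality places $\theta$ in the critical regularity class for SQG, so $\Pi_q\to 0$ in $L^1_{\rm loc}(t_0,\infty)$ as $q\to\infty$. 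Integrating the frequency-localised equation against $\theta_{\le q}$ and passing to the limit then yields the energy equality on $[t_0,\infty)$, which in turn forces $t\mapsto\theta(t)$ to be strongly continuous into $L^2(\mathbb T^2)$.

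\textbf{Steps 3 and 4: Weak attractor and strong compactness.} Using the Cheskidov-Foias framework of evolutionary systems \cite{CF}, the family $\Ee$ of all viscosity solutions starting in $X$ satisfies the required axioms: time translations and concatenations of viscosity solutions are viscosity solutions, and a diagonal extraction combined with the global energy inequality gives sequential compactness of $\Ee$ in $C_{\rm loc}([0,\infty);X_w)$, where $X_w$ is $X$ endowed with the weak $L^2$-topology. This abstractly produces a weak global attractor $\A$, characterised as the set of initial data of bounded complete viscosity solutions. To upgrade to a strong attractor I would invoke Cheskidov's abstract criterion \cite{C5}: Step 1 provides a strongly compact absorbing set (the $L^\infty$ ball is relatively compact in $L^2$ by Rellich), and Step 2 provides the $L^2$-continuity of trajectories, which are precisely the two hypotheses of that criterion. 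The uniform $\epsilon$-shadowing conclusion of the theorem is then built into the tracking property of that framework applied to any bounded initial set.

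\textbf{Main obstacle.} The crux is Step 2. The operator $R^\perp = \Lambda^{-1}(-\partial_2,\partial_1)$ is a zero-order singular integral, so it is not bounded on $L^\infty$ and an $L^\infty$ bound on $\theta$ does not directly control $u$ in $L^\infty$. One must instead commute the Littlewood-Paley blocks with $R^\perp$, estimate $u_q$ in Besov norms using the uniform $L^\infty$ bound on $\theta$, and exploit the high-high-to-low cancellation in the paraproduct decomposition of the flux. Once $\Pi_q\to 0$ is established, the remaining steps reduce to the invocation of existing abstract machinery.
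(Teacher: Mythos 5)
Your Steps 1 and 2 follow the paper's route (De Giorgi iteration with the extra $f\theta_k$ term handled by H\"older and interpolation, then vanishing of the Littlewood--Paley energy flux using the $L^\infty$ bound on $\theta$ together with $\|u_p\|_2\lesssim\|\theta_p\|_2$, giving the energy equality and strong $L^2$-continuity). The genuine problem is in your final step. You claim that Step 1 ``provides a strongly compact absorbing set (the $L^\infty$ ball is relatively compact in $L^2$ by Rellich)'' and that this compactness is one of the two hypotheses of the abstract criterion you invoke. This is false: a set bounded in $L^\infty(\mathbb T^2)$ is not precompact in $L^2(\mathbb T^2)$ --- take $\theta_n(x)=\sin(nx_1)$, which is uniformly bounded in $L^\infty$ but has no $L^2$-convergent subsequence. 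Rellich--Kondrachov requires control of positive-order Sobolev norms, which the De Giorgi estimate does not give. The paper is explicit on exactly this point: the $L^\infty$-absorbing set is \emph{not} compact in $L^2$, and the classical compact-absorbing-set argument cannot be applied to this problem; that is precisely why the result is of interest.

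The correct mechanism, and the one the paper uses, is Cheskidov's theorem for evolutionary systems: once the trajectory space $\Ec([0,\infty))$ is compact in $C([0,\infty);X_{\mathrm w})$ (condition A1, obtained by a diagonal weak-compactness argument, not by strong compactness of the absorbing ball), and conditions A2 (an energy-inequality-type continuity of the norm from the left) and A3 (strong convergence a.e.\ along weakly convergent sequences of trajectories, which follows from the uniform $L^2_tH^{1/2}$ bound) hold, then \emph{strong continuity of complete trajectories} --- supplied by your Step 2 --- already upgrades the weak global attractor to a strongly compact strong global attractor and yields the uniform tracking property. So the conclusion you want is reachable from your Steps 1--3, but not by the argument you wrote: you must verify A1--A3 for the viscosity-solution evolutionary system and invoke the continuity-based criterion, rather than appeal to a compact absorbing set that does not exist here.
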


\medskip

Note that the regularity of solutions is not needed for our approach, and is in fact an open problem. Therefore, even with stronger assumptions
on the force $f\in L^\infty(\mathbb T^2)\cap H^1(\mathbb T^2)$, we only know that the $L^2$-global attractor contains Constantin-Vicol $H^1$-attractor.
 It is an open question whether they coincide.

Finally, we would like to remark that classical arguments require a compact absorbing set in order to prove the existence of a global attractor.  Nevertheless, in \cite{B}, Ball proved the existence of the attractor for the 3D NSE
under the assumption that all the solutions are continuous in $L^2$, which is an open problem.
Later it was shown that the continuity of solutions on the weak global attractor
is enough to conclude that the attractor is strong in the case of the 3D NSE \cite{CF,R}, as well as general evolutionary systems \cite{C5},  but there was no example where
such a method could be applied, except cases where the force is small and the attractor is a fixed point.
To the best of our knowledge, the critical SQG is the first example where
 the existence of the attractor can be proved using such an argument, but the classical approach does not work.
 Indeed, the $L^\infty$-absorbing set is not compact in $L^2$ and hence classical
compactness arguments cannot be applied in this situation.

The paper is organized as follows: Section \ref{sec:infty} is devoted to proving that initial data in $L^2$ produces weak (viscosity) solutions bounded in $L^\infty$; Section \ref{sec:con} is devoted to proving that bounded weak solutions are actually continuous in $L^2$; in Section \ref{sec:att} we prove that a strong global attractor exists in $L^2$.

\subsection{Notation}
\label{sec:notation}
We denote by $A\lesssim B$ an estimate of the form $A\leq C B$ with
some absolute constant $C$, and by $A\sim B$ an estimate of the form $C_1
B\leq A\leq C_2 B$ with some absolute constants $C_1$, $C_2$.

To simplify the notations we denote $\|\cdot\|_p=\|\cdot\|_{L^p}$, and $(\cdot, \cdot)$ stands for the $L^2$-inner product.

\section{$L^\infty$ estimate}
\label{sec:infty}

The goal of this section is to show that viscosity solutions to \eqref{QG} are uniformly bounded in  $L^\infty$ provided the force $f$ is in $L^p$ for some $p>2$.

\begin{Definition}
A weak solution to \eqref{QG} is a function $\th \in C_{\mathrm{w}}([0,T];L^2(\mathbb{T}^2))$ with zero spatial average that satisfies \eqref{QG} in a distributional sense.
That is, for any $\phi\in C_0^\infty(\mathbb T^2\times(0,T))$,
\begin{equation}\notag
-\int_0^T(\theta, \phi_t)dt-\int_0^T(u\theta, \nabla\phi)dt+\nu\int_0^T(\Lambda^{\frac{1}{2}}\theta, \Lambda^{\frac{1}{2}}\phi)dt
=(\theta_0, \phi(x,0))+\int_0^T(f, \phi)dt.
\end{equation}
\end{Definition}

A weak solution $\th(t)$ on $[0,T]$ is called a viscosity solution if there exist sequences $\e_n \to 0$ and $\th_n(t)$ satisfying
\begin{equation}\begin{split}\label{VQG}
\frac{\partial\theta_n}{\partial t}+u_n\cdot\nabla \theta_n+\nu\Lambda\theta_n + \e_n \Delta \th_n=f,\\
u_n=R^\perp\theta_n,
\end{split}
\end{equation}
such that $\th_n \to \theta$ in $C_\mathrm{w}([0,T];L^2)$. Standard arguments imply that for any initial data $\th_0 \in L^2$ there exists a viscosity solution $\th(t)$ of \eqref{QG} on $[0,\infty)$ with $\th(0)=\th_0$ (see \cite{CC}, for example).

In the case of zero force, Caffarelli and Vasseur derived a level set energy inequality using a harmonic extension \cite{CaV}. Even though the force does not present any problems, we sketch a different proof here for completeness.

\begin{Lemma}
\label{le:existence}
Let $\th(t)$ be a viscosity solution to \eqref{QG} on $[0,T]$ with $\th(0) \in L^2$. Then
for every $\lambda \in \mathbb{R}$ it satisfies the level set energy inequality
\begin{equation}\label{truncated}
\frac{1}{2}\|\tilde\theta_\lambda(t_2)\|_2^2+\nu\int_{t_1}^{t_2}\|\Lambda ^{\frac{1}{2}}\tilde\theta_\lambda\|_2^2 \, dt
\leq \frac{1}{2}\|\tilde\theta_\lambda(t_1)\|_2^2+\int_{t_1}^{t_2}\int_{\mathbb T^2}f\tilde\theta_\lambda \, dxdt,
\end{equation}
for all $t_2\in[t_1, T]$ and a.e. $t_1\in [0,T]$. Here $\tilde\theta_\lambda=(\theta-\lambda)_+$ or $\tilde\theta_\lambda=(\theta_\lambda+\lambda)_{-}$.
\end{Lemma}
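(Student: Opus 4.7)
My plan is to derive the level-set energy inequality first for the smooth viscous approximations $\theta_n$ solving \eqref{VQG}, and then pass to the limit $\epsilon_n \to 0$. Multiplying \eqref{VQG} by the truncation $\tilde\theta_{n,\lambda}:=(\theta_n-\lambda)_+$ and integrating over $\mathbb{T}^2$: the time derivative contributes $\tfrac{1}{2}\tfrac{d}{dt}\|\tilde\theta_{n,\lambda}\|_2^2$; the transport term vanishes because $u_n=R^\perp\theta_n$ is divergence-free and $u_n\cdot\nabla\theta_n\cdot(\theta_n-\lambda)_+=u_n\cdot\nabla(\tilde\theta_{n,\lambda}^2/2)$; and the Laplacian term contributes the nonnegative $\epsilon_n\|\nabla\tilde\theta_{n,\lambda}\|_2^2$ on the left after integration by parts.

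The key step is handling the critical dissipation. Setting $v=\theta_n-\lambda$ and decomposing $v=v_+-v_-$ into disjointly supported nonnegative parts, I claim that
\begin{equation*}
\int_{\mathbb{T}^2}\Lambda v\cdot v_+\,dx \;\geq\; \|\Lambda^{1/2}v_+\|_2^2.
\end{equation*}
Expanding, $\int\Lambda v\cdot v_+\,dx=\|\Lambda^{1/2}v_+\|_2^2-\int\Lambda v_-\cdot v_+\,dx$, and the cross term is nonpositive because, using the positive singular-kernel representation of $\Lambda$ on the torus, at any $x\in\mathrm{supp}(v_+)$ one has $v_-(x)=0$ and hence $\Lambda v_-(x)=-\mathrm{P.V.}\int K(x,y)v_-(y)\,dy\leq 0$. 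This is the standard C\'ordoba-C\'ordoba-type inequality adapted to truncations. Since $\Lambda\theta_n=\Lambda v$ on the torus (constants are annihilated), integrating over $[t_1,t_2]$ gives the level-set energy inequality for $\theta_n$ with an additional nonnegative viscous term on the left.

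To pass to the limit, the uniform $L^2$ bound (the $\lambda=0$ case of the above) gives $\theta_n$ bounded in $L^2(0,T;H^{1/2})$, while \eqref{VQG} bounds $\partial_t\theta_n$ in a negative Sobolev space (the transport piece handled by writing $u_n\cdot\nabla\theta_n=\divv(u_n\theta_n)$). Aubin-Lions then yields $\theta_n\to\theta$ strongly in $L^2(0,T;L^2)$, so along a subsequence $\theta_n(t)\to\theta(t)$ strongly in $L^2$ for a.e.\ $t\in[0,T]$. For such "good" $t_1$, the $1$-Lipschitz map $\theta\mapsto(\theta-\lambda)_+$ gives $\|\tilde\theta_{n,\lambda}(t_1)\|_2\to\|\tilde\theta_\lambda(t_1)\|_2$. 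For every $t_2$, the $C_{\mathrm{w}}([0,T];L^2)$ convergence combined with weak lower semicontinuity of the convex functional $\theta\mapsto\|(\theta-\lambda)_+\|_2^2$ gives $\|\tilde\theta_\lambda(t_2)\|_2^2\leq\liminf_n\|\tilde\theta_{n,\lambda}(t_2)\|_2^2$. The dissipation passes via weak lower semicontinuity in $L^2((t_1,t_2);H^{1/2})$, and the force term passes via strong $L^2$ convergence of $\tilde\theta_{n,\lambda}$ together with $f\in L^p\subset L^2$ on $\mathbb{T}^2$. The alternative truncation $\tilde\theta_\lambda=-(\theta+\lambda)_-$ follows by applying the same argument to $(-\theta,-\lambda,-f)$.

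The main obstacle I anticipate is the mismatch in convergence requirements between the two sides: the right-hand side needs strong convergence at $t_1$ (to obtain equality of the initial-energy term), while the left-hand side only needs lower semicontinuity at $t_2$. This is precisely why the statement is "for every $t_2$, a.e.\ $t_1$" --- Aubin-Lions delivers strong time-pointwise convergence only on a full-measure set, and this is exactly the set on which the initial-energy term converges with equality. Every other term in the inequality requires only weak convergence in the favorable (lower-semicontinuous) direction, so the inequality survives the limit.
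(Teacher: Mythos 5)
Your proposal is correct and follows essentially the same route as the paper: multiply by the truncation $(\theta_n-\lambda)_+$, kill the transport term using $\nabla\cdot u_n=0$, and bound the critical dissipation from below by $\|\Lambda^{1/2}(\theta_n-\lambda)_+\|_2^2$ via the positivity of the periodized kernel --- your cross-term sign argument is just a reformulation of the paper's pointwise C\'ordoba--C\'ordoba-type inequality $\Lambda\theta\geq\Lambda(\theta-\lambda)_+$ on the set $\{\theta>\lambda\}$. The only difference is that you carry out the passage to the limit $\epsilon_n\to 0$ in detail (Aubin--Lions and a.e.-in-$t$ strong convergence for the $t_1$ term, weak lower semicontinuity for the $t_2$ and dissipation terms, which is exactly why the statement is ``for every $t_2$, a.e.\ $t_1$''), whereas the paper only presents the a priori estimate and declares the limit passage clear.
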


\begin{proof}

We only show a priori estimates. It is clear how to pass to the limit in \eqref{VQG} as $\e \to 0$.
Denote $\varphi(\theta)=(\theta-\lambda)_+$. Note that $\varphi$ is Lipschitz and 
\begin{equation}\notag
\varphi'(\theta)\varphi(\theta)=\varphi(\theta).
\end{equation}
Multiplying the first equation of (\ref{QG}) by $\varphi'(\theta)\varphi(\theta)$ and integrating over $\mathbb T^2$ yields
\begin{equation}\label{energy1}
\begin{split}
\frac{1}{2}\frac{d}{dt}\int_{\mathbb T^2}\varphi^2(\theta)dx+\int_{\mathbb T^2}\nabla\cdot\left(\frac{1}{2}\varphi^2(\theta)u\right)\, dx\\
+\nu\int_{\mathbb T^2}\Lambda\theta\varphi(\theta)\, dx
=\int_{\mathbb T^2}f\varphi(\theta)\, dx.
\end{split}
\end{equation}
Note that 
\begin{equation}\notag
\Lambda\theta\varphi(\theta)=0,  \qquad \mbox {on the set} \qquad \left\{ x\in\mathbb T^2: \theta(x)\leq \lambda\right\}.
\end{equation}
On the other hand, on the set $\left\{ x\in\mathbb T^2: \theta(x)> \lambda\right\}$, we have (see \cite{CC})
\begin{equation}\notag
\begin{split}
\Lambda\theta(x)&=\frac{1}{2\pi}\sum_{j\in \mathbb Z^2} P.V. \int_{\mathbb T^2}\frac{\theta(x)-\theta(x-y)}{|y+Lj|^3}\, dy\\
&=\frac{1}{2\pi}\sum_{j\in \mathbb Z^2}P.V.\int_{\mathbb T^2}\frac{\theta(x)-\lambda-[\theta(x-y)-\lambda]}{|y+Lj|^3}\, dy\\
&\geq \frac{1}{2\pi}\sum_{j\in \mathbb Z^2} P.V.\int_{\mathbb T^2}\frac{\varphi(\theta(x))-[\theta(x-y)-\lambda]_+}{|y+Lj|^3}\, dy\\
&=\Lambda\varphi(\theta(x)),
\end{split}
\end{equation}
where $L$ is the length of the periodic box.
Therefore, $\Lambda\theta \varphi(\theta) \geq  \Lambda\varphi(\theta)\varphi(\theta)$ for almost every $x\in\mathbb T^2$, and hence
\[
\int_{\mathbb T^2}\Lambda \theta \varphi(\theta)\, dx \leq  \int_{\mathbb T^2}\left|\Lambda^{\frac{1}{2}}\varphi(\theta)\right|^2\, dx.
\]
 Thus, it follows from (\ref{energy1}) that
\begin{equation}\label{energy2}
\begin{split}
\frac{1}{2}\frac{d}{dt}\int_{\mathbb T^2}\varphi^2(\theta)dx+\int_{\mathbb T^2}\nabla\cdot\left(\frac{1}{2}\varphi^2(\theta)u\right)\, dx\\
+\nu\int_{\mathbb T^2}\left|\Lambda^{\frac{1}{2}}\varphi(\theta)\right|^2\, dx
\leq\int_{\mathbb T^2}f\varphi(\theta)\, dx.
\end{split}
\end{equation}
Since the integral $\int_{\mathbb T^2}\nabla\cdot\left(\frac{1}{2}\varphi^2(\theta)u\right)\, dx=0$,
this gives us the truncated energy inequality (\ref{truncated}). The case of $\varphi(\theta)=(\theta+\lambda)_-$ is similar.

\end{proof}

The proof of the following theorem is similar to the one in \cite{CaV}. We just have to take extra care of the forcing term.

\begin{Lemma}
\label{Linfty}
Let $\theta$ be a viscosity solution of \eqref{QG} on $[0,\infty)$ with $\th(0) \in L^2$ and $f\in L^p(\mathbb T^2)$ for some $p>2$. Then
\begin{equation}\notag
\th \in L^{\infty}({\mathbb T}^2 \times (\varepsilon,\infty) ),
\end{equation}
for every $\varepsilon>0$. More precisely,
\[
\|\theta(t)\|_\infty \lesssim \frac{\|\theta(0)\|_2}{\nu t} + \frac{L^{1-\frac2p}}{\nu} \|f\|_p(1+L^{\frac12}\nu^{-\frac12} t^{-\frac12}), \qquad t>0.
\]
\end{Lemma}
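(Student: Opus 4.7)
The plan is to adapt the De Giorgi iteration of Caffarelli--Vasseur \cite{CaV} to the forced critical SQG equation, treating the force term by a Hölder argument that exploits $p>2$. Fix $t>0$ and a target level $M>0$ to be determined. Introduce the increasing truncation levels $\lambda_k=M(1-2^{-k})$, the increasing time sequence $T_k=\frac{t}{2}(1-2^{-k})$, and the truncations $\tilde\theta_k(s)=(\theta(s)-\lambda_k)_+$. Define the level-$k$ energy
\[
E_k=\sup_{s\ge T_k+t/2}\|\tilde\theta_k(s)\|_2^2+\nu\int_{T_k+t/2}^\infty \|\Lambda^{1/2}\tilde\theta_k(s)\|_2^2\,ds.
\]
The target is a strictly super-quadratic recursion $E_k\le C_0 C_1^k E_{k-1}^{1+\alpha}$ with some $\alpha>0$, from which $E_k\to 0$ whenever $E_0$ is smaller than a threshold depending on $M$. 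Since $E_k\to0$ forces $\theta(s)\le M$ for all $s\ge t$, it then suffices to choose $M$ large enough that the smallness condition on $E_0$ is satisfied, and this is where the quantitative bound asserted in the lemma will come from.

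Starting from the level set inequality \eqref{truncated} at $\lambda=\lambda_k$ and averaging the a.e.-valid lower endpoint $t_1$ over $[T_{k-1}+t/2,T_k+t/2]$, one finds a time $s_k$ in this interval at which $\|\tilde\theta_{k-1}(s_k)\|_2^2\lesssim 2^k t^{-1}\int_{T_{k-1}+t/2}^\infty \|\tilde\theta_{k-1}\|_2^2\,ds$. Plugging $s_k$ into \eqref{truncated} as the initial time yields
\[
E_k\lesssim 2^k t^{-1}\int_{T_{k-1}+t/2}^\infty \|\tilde\theta_{k-1}\|_2^2\,ds+\sup_{s\ge s_k}\int_{s_k}^s\int_{\mathbb T^2}|f|\,\tilde\theta_k\,dxd\tau.
\]
The gain that drives the iteration comes from the smallness of the level sets: on $\{\tilde\theta_k>0\}$ one has $\tilde\theta_{k-1}\ge M2^{-k}$, so Chebyshev gives $|\{\tilde\theta_k>0\}|\le (M2^{-k})^{-q}\int\tilde\theta_{k-1}^q\,dx$ for any $q\ge1$. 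Coupling this with the two-dimensional Sobolev embedding $\dot H^{1/2}(\mathbb T^2)\hookrightarrow L^4(\mathbb T^2)$ and interpolating against $L^\infty_t L^2_x$ produces a parabolic space-time bound $\int_{T_{k-1}+t/2}^\infty\|\tilde\theta_{k-1}\|_r^\rho\,ds\lesssim E_{k-1}^{\rho/2}$ for a suitable pair $(r,\rho)$ with $r>2$.

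For the forcing term, Hölder's inequality and the smallness of the support give
\[
\int_{\mathbb T^2}|f|\,\tilde\theta_k\,dx\le \|f\|_p\,\|\tilde\theta_k\|_{p'}\le \|f\|_p\,|\{\tilde\theta_k>0\}|^{1/p'-1/r}\,\|\tilde\theta_k\|_r,
\]
which is precisely where the hypothesis $p>2$ is used: since then $p'<2<r$, the measure factor is raised to a strictly positive power and converts into a negative power of $M$ through Chebyshev. Assembling both contributions yields a recursion of the form $E_k\le C^k\bigl(M^{-\beta_1}+\nu^{-\gamma}\|f\|_p^{\delta}M^{-\beta_2}\bigr)E_{k-1}^{1+\alpha}$ with positive exponents, and a routine nonlinear iteration lemma closes it provided $E_0$ is small. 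Estimating $E_0$ on $[t/2,\infty)$ via the basic energy inequality applied on $[0,t/2]$ (with $f$ absorbed by Young's inequality) produces $E_0\lesssim t^{-1}\|\theta(0)\|_2^2+L^{2-4/p}\nu^{-1}\|f\|_p^2$, and the balance between this and the $M$-dependent smallness threshold forces
\[
M\sim \frac{\|\theta(0)\|_2}{\nu t}+\frac{L^{1-2/p}}{\nu}\|f\|_p.
\]
The lower bound on $\theta$ follows by applying the same argument to $-\theta$ using the $(\theta+\lambda)_-$ version of Lemma \ref{le:existence}.

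The main obstacle I expect is the exponent bookkeeping: one must verify (i) that the recursion is genuinely super-quadratic rather than barely quadratic, (ii) that $p>2$ produces a strictly negative power of $M$ in the forcing contribution (the case $p=2$ degrades the iteration to quadratic and fails), and (iii) that the powers of $L$, $\nu$ and $t$ emerging from the balance of Hölder exponents are consistent with the natural scaling of \eqref{QG} and yield precisely $L^{1-2/p}\nu^{-1}\|f\|_p+(\nu t)^{-1}\|\theta(0)\|_2$. Everything else — averaging in $t_1$, Sobolev embedding, and the iteration lemma — is standard once these exponents are pinned down.
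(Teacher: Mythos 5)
Your core De Giorgi mechanism --- Chebyshev smallness of the level sets $\{\tilde\theta_k>0\}$, interpolation of $L^\infty_t L^2_x\cap L^2_t \dot H^{1/2}_x$ into a parabolic space--time norm, and a H\"older estimate of the force term that uses $p'<2$ to produce a negative power of $M$ and a strictly superlinear power of $E_{k-1}$ --- is the same as the paper's, and your exponent bookkeeping is consistent with the paper's recursion \eqref{iteration} (the force term there carries $U_{k-1}^{1+(2-p')/(2p')}=U_{k-1}^{1/2+1/p'}$, exactly what your choice of H\"older pairs yields). The genuine gap is in your time-interval setup: you define $E_k$ with $\sup_{s\ge T_k+t/2}$ and $\int_{T_k+t/2}^{\infty}$. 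In the forced problem these infinite-horizon quantities are infinite in general. The level-set inequality \eqref{truncated} only controls $\nu\int_{t_1}^{t_2}\|\Lambda^{1/2}\tilde\theta_\lambda\|_2^2\,ds$ by $\tfrac12\|\tilde\theta_\lambda(t_1)\|_2^2+\int_{t_1}^{t_2}\int_{\mathbb T^2} f\tilde\theta_\lambda\,dx\,ds$, and the force-work term grows linearly in $t_2-t_1$ (the solution does not decay to zero; it settles at amplitude of order $\|f\|/\nu$, so at the level $\lambda_0=0$ the truncation never vanishes for large times). Hence $E_0=\infty$, the smallness condition on $E_0$ that is supposed to start your iteration can never hold, and your claimed estimate $E_0\lesssim t^{-1}\|\theta(0)\|_2^2+L^{2-4/p}\nu^{-1}\|f\|_p^2$ is not what the energy inequality gives: on a window of length of order $t$ it gives $\|\theta(0)\|_2^2$ plus a term proportional to $t$, with no factor $t^{-1}$. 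The iteration has to be run on a finite window, as in the paper, where $U_k$ is taken over $[T_k,t_0]$ and the output is the pointwise-in-time bound $\|\theta(t_0)\|_\infty\le M$.

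A second, related gap is the passage from the iteration threshold to the stated two-term bound. The threshold is \eqref{eq:M}, $M\sim U_0^{1/2}(\nu t_0)^{-1}+(\|f\|_p/\nu)^{p/(2p-2)}U_0^{(p-2)/(4p-4)}$, and with $U_0\sim\|\theta(0)\|_2^2$ this does \emph{not} reduce to $\|\theta(0)\|_2(\nu t)^{-1}+L^{1-2/p}\nu^{-1}\|f\|_p$; your one-line ``balance'' cannot produce the force term with the right power. One needs the autonomy of the equation: apply the window estimate on time-translates of length $t_0$, and for large times feed into \eqref{eq:M} the absorbing-ball bound $\|\theta(t)\|_2^2\lesssim L^{4-2/p}\nu^{-2}\|f\|_p^2$ coming from the $L^2$ energy inequality. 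Only this two-regime argument (small $t$ giving $\|\theta(0)\|_2(\nu t)^{-1}$, large $t$ giving $L^{1-2/p}\nu^{-1}\|f\|_p$) yields the uniform bound claimed in the lemma; your write-up omits both the translation step and the absorbing-ball input.
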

\begin{proof} Take the levels 
\begin{equation}\notag
\lambda_k=M(1-2^{-k})
\end{equation}
for some $M$ to be determined later,
and denote the truncated function 
\[
\theta_k=(\theta-\lambda_k)_+.
\]
Fix $t_0>0$. 
We aim to find a bound on $\|\theta(t_0)\|_{\infty}$. 
Let $T_k=t_0(1-2^{-k})$ and 
the level set of energy as:
\begin{equation}\notag
U_k=\sup_{T_k\leq t \leq t_0}\|\theta_k(t)\|_2^2+2\nu\int_{T_k}^{t_0}\|\Lambda^{\frac{1}{2}}\theta_k(t)\|_2^2 \,dt.
\end{equation}
We take $\tilde\theta=\theta_k$ and $t_1=s \in(T_{k-1},T_k)$, $t_2=t>T_k$ in the truncated energy inequality (\ref{truncated}).
Then taking $t_1=s$, $t_2=t_0$, adding two inequalities, and taking $\sup$ in $t$ gives 
\begin{equation}\notag
U_k\leq 2\|\theta_k(s)\|_2^2+4\int_{T_{k-1}}^{t_0} \int_{\mathbb T^2}\left|f(x) \theta_k(x,\tau)\right|\, dxd\tau,
\end{equation}
for a.a. $s \in (T_{k-1}, T_k)$.
Taking the average in $s$ on $[T_{k-1}, T_k]$ yields
\begin{equation}\label{truncated1}
U_k\leq \frac{2^{k+1}}{t_0}\int_{T_{k-1}}^{t_0}\int_{\mathbb T^2} \theta_k^2(s)\,dxds+4\int_{T_{k-1}}^{t_0}\int_{\mathbb T^2}\left|f(x) \theta_k(x,t)\right|\, dxdt.
\end{equation}

By interpolation between $L^\infty(L^2)$ and $L^2(H^{\frac{1}{2}})$, there
exits an absolute constant $C$, such that for any $t_0 \leq L/\nu$ we have
\begin{equation}\label{interpolation}
\int_{T_{k}}^{t_0}\int_{\mathbb T^2} |\theta_{k}|^3\,dxdt \leq 
 C\left(\frac{1}{2\nu}
+\frac{t_0}{2L}\right) U_k^{3/2} \leq \frac{C}{\nu} U_k^{3/2}.
\end{equation}

Note that
\begin{equation}\notag
\theta_{k-1}\geq 2^{-k}M \qquad \mbox {on} \qquad  \left\{(x,t):\theta_k(x,t)>0\right\},
\end{equation}
which implies
\begin{equation}\notag
1_{\left\{\theta_k>0\right\}}\leq \frac{2^k}{M}\theta_{k-1}.
\end{equation}
Therefore, using (\ref{interpolation}) and the fact that $\theta_k\leq\theta_{k-1}$, we have
\begin{equation}\label{truncated2}
\begin{split}
\frac{2^{k+1}}{t_0}\int_{T_{k-1}}^{t_0}\int_{\mathbb T^2}& \theta_k^2(x,s)\,dxds\\
\leq &\frac{2^{k+1}}{t_0}\int_{T_{k-1}}^{t_0}\int_{\mathbb T^2} \theta_{k-1}^2(x,s)1_{\left\{\theta_k>0\right\}}\,dxds\\
\leq &\frac{2^{2k+1}}{t_0M}\int_{T_{k-1}}^{t_0}\int_{\mathbb T^2} |\theta_{k-1}|^3\,dxds\\
\leq &C\frac{2^{2k+1}}{\nu t_0M}U_{k-1}^{3/2}.
\end{split}
\end{equation}
On the other hand, since $f\in L^p(\mathbb T^2)$ with $p>2$, we obtain, for $p'=\frac{p}{p-1}$,
\begin{equation}\label{truncated3}
\begin{split}
&\int_{T_{k-1}}^{t_0} \int_{\mathbb T^2}\left|f(x) \theta_k(x,t)\right|dxdt\\
\leq &\|f\|_p\int_{T_{k-1}}^{t_0} \left(\int_{\mathbb T^2}|\theta|^{p'}_kdx\right)^{1/{p'}}dt\\
\leq &\|f\|_p\int_{T_{k-1}}^{t_0} \left(\int_{\mathbb T^2}|\theta|^{p'}_{k-1}1^2_{\left\{\theta_k>0\right\}}dx\right)^{1/{p'}}dt\\
\leq &\|f\|_p\frac{2^{{2k/{p'}}}}{M^{2/{p'}}}\int_{T_{k-1}}^{t_0} \left(\int_{\mathbb T^2}|\theta_{k-1}|^{2+p'}dx\right)^{1/{p'}}dt\\
\leq &\|f\|_p\frac{2^{{2k/{p'}}}}{M^{2/{p'}}}\sup_{t\geq T_{k-1}}\left(\int_{\mathbb{T}^2}|\theta_{k-1}|^2\, dx\right)^{(2-p')/{2p'}}\int_{T_{k-1}}^{t_0} \left(\int_{\mathbb T^2}|\theta_{k-1}|^4dx\right)^{1/2}dt\\
\leq &\|f\|_p\frac{2^{{2k/{p'}}}}{\nu M^{2/{p'}}}U_{k-1}^{1+(2-p')/{2p'}}.
\end{split}
\end{equation}
Note that $(2-p')/{2p'}\in(0,1/2]$.
Combining (\ref{truncated1}), (\ref{truncated2}), and (\ref{truncated3}) yields
\begin{equation}\label{iteration}
U_k\leq C\frac{2^{2k+1}}{\nu t_0M}U_{k-1}^{3/2}+C\|f\|_p\frac{2^{{2k/{p'}}}}{\nu M^{2/{p'}}}U_{k-1}^{1+(2-p')/{2p'}},
\end{equation}
with an adimensional  constant $C$ independent of $k$. For a large enough $M$, more precisely for  
\begin{equation} \label{eq:M}
M \sim \frac{U_0^{\frac{1}{2}}}{\nu t_0} + \left(\frac{\|f\|_p}{\nu}\right)^{\frac{p}{2p-2}} U_0^{\frac{p-2}{4p-4}},
\end{equation}
the above nonlinear iteration inequality implies that $U_k$ converges to 0 as $k\to \infty$. Thus $\theta(x,t_0)\leq M$ for almost every $x$.
The same argument applied to $\theta_k=(\theta+\lambda_k)_-$ gives also
a lower bound.

Now due to the energy inequality,
\begin{equation} \label{eq:ei-simple}
U_0 \lesssim \|\th(0)\|_2^2 + \frac{t_0L}{\nu}\|f\|_2^2 \leq
\|\th(0)\|_2^2 + \frac{t_0}{\nu}L^{3-\frac4p}\|f\|_p^2
\end{equation}
Notice that we are in the autonomous case, and hence we can use \eqref{eq:M} to bound $\|\theta (t+t_0)\|_\infty$ in terms of
$\|\theta(t)\|_2$ for all $t>0$.
So applying \eqref{eq:M} on intervals of length $t_0$, we immediately obtain that $\th \in L^{\infty}({\mathbb T}^2 \times (\varepsilon,\infty) )$
for every $\varepsilon>0$ due to the fact that there is an absorbing ball in $L^2$ (see
Section 4). It is also easy to obtain an explicit bound on the whole interval $(0,\infty)$
by first combining the bound \eqref{eq:ei-simple} with \eqref{eq:M}, which gives
\begin{equation} \label{eq:Linf-firstbound}
\|\theta(t)\|_\infty \lesssim \frac{\|\theta(0)\|_2}{\nu t} +\frac{L^{1-\frac2p}}{\nu} \|f\|_p(1+L^{\frac12}\nu^{-\frac12} t^{-\frac12}), \qquad t \leq L/\nu.
\end{equation}

To show that this bound also holds for $t>L/\nu$, we fix $t=T=L/\nu$ in \eqref{eq:Linf-firstbound}
and then shift it by $t-T$ in time obtaining
\begin{equation} \label{eq:Linf-firstbound2}
\|\theta(t)\|_\infty \lesssim \frac{\|\theta(t-T)\|_2}{\nu T} +\frac{L^{1-\frac2p}}{\nu} \|f\|_p, \qquad t\geq T.
\end{equation}
Now note that thanks to the energy inequality,
\[
\begin{split}
\|\th(t-T)\|_2^2 &\lesssim \|\th(0)\|_2^2 e^{-\nu \frac{2\pi}{L}(t-T)} + \frac{L^2}{\nu^2} \|f\|_2^2\\
&\lesssim \|\th(0)\|_2^2 e^{-\nu \frac{2\pi}{L}(t-T)} + \frac{L^{4-\frac4p}}{\nu^2} \|f\|_p^2.
\end{split}
\]
Combining this with \eqref{eq:Linf-firstbound2}, recalling that $T=L/\nu$, and using the fact that $e^{-(t-1)} \leq  1/t$
on $(0,\infty)$,  we arrive at
\[
\begin{split}
\|\theta(t)\|_\infty \lesssim \frac{\|\theta(0)\|_2}{\nu T}e^{-\nu \frac{\pi}{L}(t-T)} +\frac{L^{1-\frac2p}}{\nu} \|f\|_p\\
 \lesssim \frac{\|\theta(0)\|_2}{\nu t} +\frac{L^{1-\frac2p}}{\nu} \|f\|_p
,\qquad t\geq T.
\end{split}
\]


\end{proof}

\section{Continuity in $L^2$}
\label{sec:con}

\subsection{Littlewood-Paley decomposition}
\label{sec:LPD}
The techniques presented in this section rely strongly on the Littlewood-Paley decomposition that we recall here briefly. For a more detailed description on this theory we refer the readers to the books by Bahouri, Chemin and Danchin \cite{BCD} and Grafakos \cite{Gr}.

We denote $\lambda_q=2^q$ for an integer $q$. A nonnegative radial function $\chi\in C_0^\infty(\R^n)$ is chosen such that 
\begin{equation}\notag
\chi(\xi)=
\begin{cases}
1, \ \ \mbox { for } |\xi|\leq\frac{1}{2}\\
0, \ \ \mbox { for } |\xi|\geq 1.
\end{cases}
\end{equation}
Let $\varphi(\xi)=\chi(2^{-q}\xi)-\chi(\xi)$
and
\begin{equation}\notag
\varphi_q(\xi)=
\begin{cases}
\varphi(2^{-q}\xi)  \ \ \ \mbox { for } q\geq 0,\\
\chi(\xi) \ \ \ \mbox { for } q=-1.
\end{cases}
\end{equation}
For a tempered distribution vector field $u$ on the torus $\T^n$ we consider the Littlewood-Paley projection
\begin{equation}
u_q (x):=\Delta_qu = \sum_{k \in \Z^n} \hat{u}_k \varphi_q(k) e^{i k\cdot x}, \quad q \geq -1,
\end{equation}
where $\hat{u}_k$ is the Fourier coefficient of $u$. Then the Littlewood-Paley decomposition
\bg\notag
u=\sum_{q=-1}^\infty u_q
\ed
holds in the sense of distributions. To simplify the notation, we denote
\bg\notag
u_{\leq Q}=\sum_{q=-1}^Qu_q, \qquad u_{> Q}=\sum_{q=Q+1}^\infty u_q.
\ed
We will also use
\[
h_Q(x) := \sum_{k \in \Z^n} \chi(\lambda_Q^{-1}k) e^{i k\cdot x}.
\]

\subsection{Energy equality}

In this section we prove that $\theta\in L^\infty(\mathbb T^2 \times (t_0,\infty))$ for all $t_0>0$ implies that $\th(t)$ satisfies the basic energy equality. Namely,
\begin{Theorem}\label{basic-energy}
Let $\theta(t)$ be a viscosity solution of \eqref{QG} on $[0,\infty)$ with $\th(0) \in L^2$ and $f \in L^{p}$ for some $p>2$. Then $\th(t)$ satisfies the following energy equality:
\begin{equation}\notag
\frac{1}{2}\|\theta(t)\|_2^2+\nu\int_{t_0}^{t}\|\Lambda^{\frac{1}{2}}\theta(x,s)\|_2^2\,ds= 
\frac{1}{2}\|\theta(t_0)\|_2^2+\int_{t_0}^{t}\int_{\mathbb T^2}f\theta \, dxds,
\end{equation}
for all $0\leq t_0 \leq t$.
\end{Theorem}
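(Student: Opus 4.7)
The plan is to localize \eqref{QG} in frequency using the Littlewood-Paley truncations $\theta_{\leq Q}$ from Section \ref{sec:LPD}, derive an approximate energy balance carrying a nonlinear flux term, and show that this flux vanishes as $Q \to \infty$. This is made possible by Lemma \ref{Linfty}, which gives $\theta \in L^\infty([t_0, t] \times \mathbb{T}^2)$ for every $t_0 > 0$; combined with the $L^2_t H^{1/2}$ control from the standard energy inequality, this places $\theta$ into a regularity class strong enough to trigger the vanishing-flux mechanism of Cheskidov--Constantin--Friedlander--Shvydkoy \cite{CCFS}, in the spirit of Onsager's conjecture.

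\textbf{Localized identity and vanishing flux.} Projecting \eqref{QG} onto frequencies $\lesssim \lambda_Q$, pairing the result with $\theta_{\leq Q}$, and integrating on $[t_0, t]$ with $t_0 > 0$ gives
\begin{equation*}
\frac{1}{2}\|\theta_{\leq Q}(t)\|_2^2 + \nu \int_{t_0}^t \|\Lambda^{\frac{1}{2}} \theta_{\leq Q}\|_2^2 \, ds = \frac{1}{2}\|\theta_{\leq Q}(t_0)\|_2^2 + \int_{t_0}^t (f, \theta_{\leq Q}) \, ds + \int_{t_0}^t \Pi_Q \, ds,
\end{equation*}
where, since $u = R^\perp \theta$ and $u_{\leq Q}$ are both divergence-free,
\begin{equation*}
\Pi_Q = -\bigl((u \cdot \nabla \theta)_{\leq Q} - u_{\leq Q} \cdot \nabla \theta_{\leq Q},\; \theta_{\leq Q}\bigr).
\end{equation*}
Using Bony's paraproduct decomposition on this commutator-type expression, and exploiting that $u$ and $\theta$ live at the same regularity level via $u = R^\perp \theta$, I expect an estimate of the form
\begin{equation*}
|\Pi_Q(s)| \lesssim \|\theta(s)\|_\infty \sum_{q \geq Q - 2} \lambda_q \|\theta_q(s)\|_2^2.
\end{equation*}
Then Lemma \ref{Linfty} (bounding the $L^\infty$ factor on $[t_0, t]$) together with $\sum_q \lambda_q \|\theta_q\|_2^2 \sim \|\Lambda^{1/2} \theta\|_2^2 \in L^1([t_0, t])$ (from the energy inequality) yields $\int_{t_0}^t \Pi_Q \, ds \to 0$ as $Q \to \infty$ by dominated convergence. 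The remaining limits are standard: $\theta_{\leq Q}(s) \to \theta(s)$ in $L^2$ for every $s$, $\Lambda^{1/2} \theta_{\leq Q} \to \Lambda^{1/2} \theta$ in $L^2_{t,x}$, and $f_{\leq Q} \to f$ in $L^2$. This gives the energy equality for every $t_0 > 0$.

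\textbf{Extension to $t_0 = 0$ and main obstacle.} For $t_0 = 0$, passing $\epsilon_n \to 0$ in the exact energy identity for the viscous approximations \eqref{VQG} produces the standard energy inequality, whence $\limsup_{t_0 \to 0^+} \|\theta(t_0)\|_2 \leq \|\theta(0)\|_2$; weak continuity of $\theta$ gives the reverse inequality, so $\|\theta(t_0)\|_2 \to \|\theta(0)\|_2$. The integrals over $[t_0, t]$ converge to those over $[0, t]$ by monotone convergence, so sending $t_0 \to 0^+$ in the identity already proven completes the argument. The main technical obstacle is the paraproduct bound for $\Pi_Q$: the low-frequency contributions of the Bony decomposition must cancel thanks to the divergence-free structure of $u$, and one factor must be absorbed into $\|\theta\|_\infty$ so that the residual sum is controlled by $\sum_{q \geq Q-2} \lambda_q \|\theta_q\|_2^2$ rather than by a non-summable quantity. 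Beyond this, the scheme is essentially the CCFS Littlewood--Paley calculation adapted to the critical SQG equation.
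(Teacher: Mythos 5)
Your overall strategy is the same as the paper's: prove the energy equality by showing that the Littlewood--Paley energy flux through frequency $\lambda_Q$ vanishes as $Q\to\infty$, using the uniform $L^\infty$ bound of Lemma~\ref{Linfty} together with the $L^2_t H^{1/2}_x$ control coming from the energy inequality, in the spirit of \cite{CCFS}; your treatment of $t_0=0$ likewise matches the paper's use of $\lim_{t\to 0^+}\|\theta(t)\|_2=\|\theta(0)\|_2$ for viscosity solutions. The difference is in how the flux is estimated: the paper writes $(u\theta)_{\leq Q}=r_Q(u,\theta)-u_{>Q}\theta_{>Q}+u_{\leq Q}\theta_{\leq Q}$ with the Constantin--E--Titi type commutator $r_Q(u,\theta)=\int h_Q(y)\bigl(u(x-y)-u(x)\bigr)\bigl(\theta(x-y)-\theta(x)\bigr)dy$, while you propose a Bony paraproduct analysis and leave its key output as an expectation. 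That expectation is exactly where the gap is.

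The claimed bound $|\Pi_Q(s)|\lesssim\|\theta(s)\|_\infty\sum_{q\geq Q-2}\lambda_q\|\theta_q(s)\|_2^2$ is not of the form the decomposition actually yields, and it does not appear obtainable with the available regularity. The high-high contribution (the paper's term $\int_{\mathbb T^2}u_{>Q}\theta_{>Q}\cdot\nabla\theta_{\leq Q}\,dx$, or your Bony remainder) inevitably carries the factor $\nabla\theta_{\leq Q}$: measuring it in $L^2$ produces $\bigl(\sum_{p'\leq Q}\lambda_{p'}^2\|\theta_{p'}\|_2^2\bigr)^{1/2}$, so after Young's inequality the low frequencies enter, only damped by exponential weights $\lambda_{p'-Q}$; measuring it instead in $L^\infty$ (cost $\lambda_Q\|\theta\|_\infty$ by Bernstein) leaves $\lambda_Q\|u_{>Q}\|_2\|\theta_{>Q}\|_2\lesssim\|\theta\|_\infty\bigl(\sum_{p>Q}\lambda_p^{1/2}\|\theta_p\|_2\bigr)^2$, an $\ell^1$-type quantity that $L^2_tH^{1/2}$ does not control. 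A related point you flag but do not resolve: $u=R^\perp\theta$ is not bounded by $\|\theta\|_\infty$ in $L^\infty$, and the paper's $r_Q$ decomposition is arranged precisely so that the sup norm falls on the $\theta$-increments while the $u$-increments are measured in $L^2$ and expanded over dyadic blocks; with paraproducts you must make the analogous choice block by block. The correct conclusion, proved in the paper, is the weighted estimate \eqref{flux}, namely $\bigl|\int_{t_0}^{T}\Pi_Q\,dt\bigr|\lesssim\sum_{p}\lambda^{-1/2}_{|p-Q|}\int_{t_0}^{T}\lambda_p\|\theta_p\|_2^2\,dt$, and its vanishing as $Q\to\infty$ uses the summability $\sum_p\int_{t_0}^T\lambda_p\|\theta_p\|_2^2\,dt\sim\int_{t_0}^T\|\Lambda^{1/2}\theta\|_2^2\,dt<\infty$ convolved against the decaying kernel $\lambda^{-1/2}_{|p-Q|}$, rather than a pure tail/dominated-convergence argument. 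With the flux estimate in this corrected form, the rest of your scheme goes through and coincides with the paper's proof.
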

\begin{proof}
Since $\th(t)$ is a viscosity solution,
\[
\lim_{t \to 0+} \|\th(t)\|_2 = \|\th(0)\|_2.
\]
Thus, it is enough to show that the energy flux vanishes on $[t_0,T]$ for every $0<t_0<T$. That is,
\[
\limsup_{Q \to \infty} \int_{t_0}^T\int_{\mathbb T^2}u\th \cdot\nabla (\theta_{\leq Q})_{\leq Q} \, dxdt=0.
\]
We use the Littlewood-Paley decomposition. As in \cite{CCFS},
denote 
\[r_Q(u,\theta)=\int_{\mathbb T^2}h_Q(y)\left(u(x-y)-u(x)\right) \left(\theta(x-y)-\theta(x)\right)dy.\]
Then 
\[(u\theta)_{\leq Q}=r_Q(u,\theta)-u_{>Q}\theta_{>Q}+u_{\leq Q}\theta_{\leq Q}.\]
The energy flux can be written as 
\begin{equation}\notag
\begin{split}
\Pi_Q=&\int_{\mathbb T^2}(u\theta)\cdot \nabla(\theta_{\leq Q})_{\leq Q}\,dx\\
=&\int_{\mathbb T^2}(u\theta)_{\leq Q}\cdot\nabla\theta_{\leq Q}\,dx\\
=&\int_{\mathbb T^2}r_Q(u,\theta)\cdot \nabla\theta_{\leq Q}dx
-\int_{\mathbb T^2}u_{>Q}\theta_{>Q}\cdot \nabla\theta_{\leq Q}\,dx.
\end{split}
\end{equation}
Here we used the fact that $\int_{\mathbb T^2}u_{\leq Q}\theta_{\leq Q}\cdot \nabla(\theta_{\leq Q})\,dx=0$ since $u$ is divergence free.

Now notice that
\begin{equation}\notag
\begin{split}
&\|u(\cdot-y)-u(\cdot)\|_2 \lesssim\sum_{p\leq Q}|y|\lambda_p\|u_p\|_2+\sum_{p> Q}\|u_p\|_2.\\
\end{split}
\end{equation}
Thanks to Lemma~\ref{Linfty}, there exists $C$, such that
$\|\theta(t)\|_\infty \leq C$ for all  $t >t_0$. Then

\begin{equation}\notag
\begin{split}
\|r_Q(u,\theta)\|_2&\leq \int_{\mathbb T^2}h_Q(y)\|u(\cdot-y)-u(\cdot)\|_2\|\theta(\cdot-y)-\theta(\cdot)\|_\infty\,dy\\
&\lesssim \int_{\mathbb T^2}h_Q(y)\left( \sum_{p\leq Q}|y|\lambda_p\|u_p\|_2
+\sum_{p> Q}\|u_p\|_2 \right)\, dy\\
&\lesssim \sum_{p\leq Q}\lambda_Q^{-1}\lambda_p\|u_p\|_2
+\sum_{p> Q}\|u_p\|_2.
\end{split}
\end{equation}
Therefore, applying Bernstein's inequality we obtain that
\begin{equation}\notag
\begin{split}
\left|\int_{\mathbb T^2}r_Q(u,\theta)\cdot\nabla\theta_{\leq Q}\,dx\right|
\leq& \|r_Q(u,\theta)\|_2\|\nabla \theta_{\leq Q}\|_2\\
\lesssim& \sum_{p\leq Q}\lambda_Q^{-1}\lambda_p\|u_p\|_2\left(\sum_{p'\leq Q}\lambda^2_{p'}\|\theta_{p'}\|_2^2\right)^{\frac 12}\\
&+\sum_{p> Q}\|u_p\|_2\left(\sum_{p'\leq Q}\lambda^2_{p'}\|\theta_{p'}\|^2_2\right)^{\frac 12}\\
:= &I+II.
\end{split}
\end{equation}
The terms $I$ and $II$ can be estimated using H\"older's inequality, Young's inequality and the fact $\|u_p\|_2\lesssim \|\theta_p\|_2$ as follows:
\begin{equation}\notag
\begin{split}
I\lesssim &\sum_{p\leq Q}\lambda_Q^{-1}\lambda_p\|u_p\|_2\left(\sum_{p'\leq Q}\lambda^2_{p'}\|\theta_{p'}\|_2^2\right)^{\frac 12}\\
\lesssim &\sum_{p\leq Q}\lambda_{p-Q}^{\frac 12}\lambda_p^{\frac 12}\|u_p\|_2\left(\sum_{p'\leq Q}\lambda_{p'-Q}\lambda_{p'}\|\theta_{p'}\|_2^2\right)^{\frac 12}\\
\lesssim &\sum_{p\leq Q}\lambda_{p-Q}^{\frac 12}\lambda_p\|\theta_p\|_2^2
+\sum_{p'\leq Q}\lambda_{p'-Q}\lambda_{p'}\|\theta_{p'}\|_2^2\\
\lesssim &\sum_{p\leq Q}\lambda_{p-Q}^{\frac 12}\lambda_p\|\theta_p\|_2^2;
\end{split}
\end{equation}
and
\begin{equation}\notag
\begin{split}
II\lesssim &\sum_{p> Q}\|u_p\|_2\left(\sum_{p'\leq Q}\lambda^2_{p'}\|\theta_{p'}\|_2^2\right)^{\frac 12}\\
\lesssim &\sum_{p> Q}\lambda_{Q-p}^{\frac 12}\lambda_p^{\frac 12}\|u_p\|_2\left(\sum_{p'\leq Q}\lambda_{p'-Q}\lambda_{p'}\|\theta_{p'}\|_2^2\right)^{\frac 12}\\
\lesssim &\sum_{p> Q}\lambda_{Q-p}^{\frac 12}\lambda_p\|\theta_p\|_2^2
+\sum_{p'\leq Q}\lambda_{p'-Q}\lambda_{p'}\|\theta_{p'}\|_2^2.
\end{split}
\end{equation}
On the other hand, we have a similar estimate
\begin{equation}\notag
\begin{split}
\left|\int_{\mathbb T^2}u_{>Q}\theta_{>Q}\cdot\nabla\theta_{\leq Q}\,dx\right|
\lesssim &\sum_{p>Q}\|\theta_p\|_2\left(\sum_{p'\leq Q}\lambda_{p'}^2\|\theta_{p'}\|_2^2\right)^{\frac 12}\\
\lesssim &\sum_{p> Q}\lambda_{Q-p}^{\frac 12}\lambda_p\|\theta_p\|_2^2
+\sum_{p'\leq Q}\lambda_{p'-Q}\lambda_{p'}\|\theta_{p'}\|_2^2.
\end{split}
\end{equation}
Therefore 
\begin{equation}\label{flux}
\left|\int_{t_0}^{T}\Pi_Q\,dt\right|\lesssim \sum_{p\leq Q}\lambda^{-\frac 12}_{|p-Q|}\int_{t_0}^{T}\lambda_p\|\theta_p\|_2^2\,dt.
\end{equation}
Since $\theta\in L_{loc}^2(0,\infty;H^{1/2}(\mathbb T^2))$, we have that
\[\int_{t_0}^{T}\lambda_p\|\theta_p\|_2^2\,dt\to 0 \qquad \mbox { as } p\to\infty.\]
It implies the right hand side of (\ref{flux}) converges to 0 as $Q\to\infty$, which gives that 
\[
\lim_{Q\to \infty}\int_{t_0}^{T} \Pi_Q \,dt=0.
\]
This completes the proof of the theorem. 
\end{proof}


\bigskip

\section{Global attractor}
\label{sec:att}

Let $(X,\ds(\cdot,\cdot))$ be a metric space endowed with
a metric $\ds$, which will be referred to as a strong metric. For the SQG equation, we will choose $X$ to be an absorbing ball, and $\ds$ be
the $L^2$-metric. 
Let $\dw(\cdot, \cdot)$ be another metric on $X$ satisfying
the following conditions:
\begin{enumerate}
\item $X$ is $\dw$-compact.
\item If $\ds(u_n, v_n) \to 0$ as $n \to \infty$ for some
$u_n, v_n \in X$, then $\dw(u_n, v_n) \to 0$ as $n \to \infty$.
\end{enumerate}
Due to the property 2, $\dw(\cdot,\cdot)$ will be referred to as a weak metric on $X$. Denote by $\overline{A}^{\bullet}$ the closure of a set $A\subset X$
in the topology generated by $\db$.
Note that any strongly compact ($\ds$-compact) set is weakly compact
($\dw$-compact), and any weakly closed set is strongly closed.

Let $C([a, b];X_\bullet)$, where $\bullet = \mathrm{s}$ or $\mathrm{w}$, be the space of $\db$-continuous $X$-valued
functions on $[a, b]$ 
endowed with the metric
\[
\dd_{C([a, b];X_\bullet)}(u,v) := \sup_{t\in[a,b]}\db(u(t),v(t)). 
\]
Let also $C([a, \infty);X_\bullet)$ be the space of $\db$-continuous
$X$-valued functions on $[a, \infty)$
endowed with the metric
\[
\dd_{C([a, \infty);X_\bullet)}(u,v) := \sum_{T\in \mathbb{N}} \frac{1}{2^T} \frac{\sup\{\db(u(t),v(t)):a\leq t\leq a+T\}}
{1+\sup\{\db(u(t),v(t)):a\leq t\leq a+T\}}.
\]

To define an evolutionary system, first let
\[
\mathcal{T} := \{ I: \ I=[T,\infty) \subset \mathbb{R}, \mbox{ or } 
I=(-\infty, \infty) \},
\]
and for each $I \subset \mathcal{T}$, let $\mathcal{F}(I)$ denote
the set of all $X$-valued functions on $I$.
\begin{Definition} \label{Dc}
A map $\Ec$ that associates to each $I\in \mathcal{T}$ a subset
$\Ec(I) \subset \mathcal{F}$ will be called an evolutionary system if
the following conditions are satisfied:
\begin{enumerate}
\item $\Ec([0,\infty)) \ne \emptyset$.
\item
$\Ec(I+s)=\{u(\cdot): \ u(\cdot +s) \in \Ec(I) \}$ for
all $s \in \mathbb{R}$.
\item $\{u(\cdot)|_{I_2} : u(\cdot) \in \Ec(I_1)\}
\subset \Ec(I_2)$ for all
pairs $I_1,I_2 \in \mathcal{T}$, such that $I_2 \subset I_1$.
\item
$\Ec((-\infty , \infty)) = \{u(\cdot) : \ u(\cdot)|_{[T,\infty)}
\in \Ec([T, \infty)) \ \forall T \in \mathbb{R} \}.$
\end{enumerate}
\end{Definition}
We will refer to $\Ec(I)$ as the set of all trajectories
on the time interval $I$. Trajectories in $\Ec((-\infty,\infty))$ will be called complete.
Let $P(X)$ be the set of all subsets of $X$.
For every $t \geq 0$, define a map
\begin{eqnarray*}
&R(t):P(X) \to P(X),&\\
&R(t)A := \{u(t): u\in A, u(\cdot) \in \Ec([0,\infty))\}, \qquad
A \subset X.&
\end{eqnarray*}
Note that the assumptions on $\Ec$ imply that $R(s)$ enjoys
the following property:
\begin{equation} \label{eq:propR(T)}
R(t+s)A \subset R(t)R(s)A, \qquad A \subset X,\quad t,s \geq 0.
\end{equation}

\begin{Definition}
A set $A \subset X$ is a $\mathrm{d}_{\bullet}$-attracting set
($\bullet = \mathrm{s,w}$) if it uniformly
attracts $X$ in $\mathrm{d}_{\bullet}$-metric.
\end{Definition}

\begin{Definition}
A set
$\mathcal{A}_{\bullet}\subset X$ is a
$\mathrm{d}_{\bullet}$-global attractor ($\bullet = \mathrm{s,w}$) if
$\mathcal{A}_{\bullet}$ is a minimal $\mathrm{d}_{\bullet}$-closed
$\mathrm{d}_{\bullet}$-attracting  set.
\end{Definition}

As we will see later, the evolutionary system $\Ec$ of whose trajectories are solutions to the
SQG equation also satisfies the following properties:
\begin{itemize}
\item[A1] $\Ec([0,\infty))$ is a compact set in $C([0,\infty); \Xw)$.
\item[A2] (Energy inequality) Assume that $X$ is a set in some Banach space $H$ satisfying the Radon-Riesz
property with the norm denoted by $\| \cdot \|$, so that $\ds(x, y) = \|x-y\|$ for $x, y \in X$ and $\dw$ induces the weak topology on X.
Assume also that for any $\epsilon >0$, there exists $\delta$, such that
for every $u \in \Ec([0,\infty))$ and $t>0$,
\[
\|u(t)\| \leq \|u(t_0)\| + \epsilon,
\]
for $t_0$ a.e. in $(t-\delta, t)\cap[0,\infty)$.
\item[A3] (Strong convergence a.e.) Let $u_n \in \Ec([0,\infty))$, be such that
$u_n \to u\in\Ec([0,\infty))$ in $C([0, T];\Xw)$ for some $T>0$. Then
$u_n(t) \to u(t)$ strongly a.e. in $[0,T]$.
\end{itemize}

\begin{Definition}
A Banach space $H$ with the norm $\|\cdot\|$ satisfies the Radon-Riesz property
if $x_n \to x$ in $H$  if and only if $x_n \to x$ weakly and $\lim \|x_n\| = \|x\|$ as $n \to \infty$.
\end{Definition}

The following results were proved in \cite{C5}:
\begin{Theorem} \label{thm:Attractor}
Let $\Ec$ be an evolutionary system satisfying $A1$. Then
the weak global attractor $\Aw$ exists and
\[
\Aw=\{u_0: u_0 =u(0) \mbox { for some } u\in \Ec((-\infty,\infty))\}.
\]
Furthermore, if $\Ec$ also satisfies A2, A3, and every complete trajectory is strongly continuous, then
\begin{enumerate}
\item The strong global attractor $\As$ exists, it is strongly compact, and $\As = \Aw$.
\item (Strong uniform tracking property) for any $\epsilon > 0$ and $T > 0$, there exists $t_0$, such that for any $t^* > t_0$, every trajectory
 $u\in\Ec([0,\infty))$ satisfies $\ds(u(t), v(t)) < \epsilon$, for all $t \in [t^*, t^* +T ]$, for some complete trajectory $v \in \Ec ((-\infty, \infty))$.
\end{enumerate}
\end{Theorem}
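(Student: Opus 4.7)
The plan is to prove the weak part using only A1, and then upgrade to the strong statement and the tracking property using A2, A3, and strong continuity of complete trajectories. The workhorse throughout is a shift-and-compactness argument powered by A1.

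\textbf{Existence and characterization of $\Aw$.} I would define
\[
\Aw := \bigcap_{t \geq 0} \overline{R([t,\infty))X}^{\,\dw},
\]
a nested intersection of nonempty $\dw$-closed subsets of the $\dw$-compact space $X$, hence nonempty and $\dw$-closed. The inclusion $\{u(0) : u \in \Ec((-\infty,\infty))\} \subset \Aw$ follows because for every $T \geq 0$, Definition~\ref{Dc}(3) gives $u|_{[-T,\infty)} \in \Ec([-T,\infty))$, and then Definition~\ref{Dc}(2) gives $t \mapsto u(t-T) \in \Ec([0,\infty))$, so $u(0) \in R(T) X$. The reverse inclusion and the uniform $\dw$-attraction are both consequences of a single device: given $u_n \in \Ec([0,\infty))$ and $t_n \to \infty$, set $v_n(s) := u_n(s + t_n)$; these eventually lie in $\Ec([-k,\infty))$ for any fixed $k$ by Definition~\ref{Dc}(2), and A1 combined with Definition~\ref{Dc}(2) reduces compactness in $\Ec([-k,\infty))$ to compactness in $\Ec([0,\infty))$, so a diagonal extraction over $k$ yields a subsequence converging in $C([-k,\infty);\Xw)$ for all $k$ to some $v \in \Ec((-\infty,\infty))$ (invoking Definition~\ref{Dc}(4)). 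Then $u_n(t_n) = v_n(0) \to v(0)$ in $\dw$, which simultaneously produces the complete trajectory through any $\dw$-limit point of $\{u_n(t_n)\}$ and rules out any failure of uniform $\dw$-attraction. Minimality of $\Aw$ follows routinely from its $R$-invariance under \eqref{eq:propR(T)} and the characterization.

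\textbf{$\Aw$ is strongly compact and equals $\As$.} The crux is strong sequential compactness of $\Aw$. Given $u_n(0) \in \Aw$ with $u_n \in \Ec((-\infty,\infty))$, the diagonal extraction above produces $u \in \Ec((-\infty,\infty))$ with $u_n \to u$ in $C([-k,\infty);\Xw)$ for every $k$; by hypothesis $u$ is $\ds$-continuous. Fix $\varepsilon > 0$ and let $\delta > 0$ be furnished by A2. Since A3 gives strong convergence $u_n(t) \to u(t)$ for a.e. $t$ in any bounded interval, and A2 provides for each $n$ a measure-zero exceptional set of starting times in $(-\delta,0)$ at which the energy bound fails, I may pick a single $t_0 \in (-\delta,0)$ outside the countable union of all these null sets. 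Then A2 gives $\|u_n(0)\| \leq \|u_n(t_0)\| + \varepsilon$ for all $n$; passing to the limit and using $\|u_n(t_0)\| \to \|u(t_0)\|$ together with strong continuity $\|u(t_0)\| \leq \|u(0)\| + \varepsilon$ (for $t_0$ close enough to $0$) yields $\limsup \|u_n(0)\| \leq \|u(0)\| + 2\varepsilon$. Combined with weak lower semicontinuity of the norm and the Radon--Riesz property, this upgrades $\dw$-convergence to $\ds$-convergence at $t=0$. The same shift-and-strong-convergence scheme applied at a positive evaluation time shows that $\Aw$ also $\ds$-attracts $X$, and minimality then forces $\As = \Aw$.

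\textbf{Strong uniform tracking and the main obstacle.} For the last assertion I would argue by contradiction: if tracking fails, there exist $\varepsilon_0, T_0 > 0$, $t_n^* \to \infty$, and $u_n \in \Ec([0,\infty))$ such that every complete trajectory misses $u_n$ by at least $\varepsilon_0$ somewhere on $[t_n^*, t_n^* + T_0]$. Set $w_n(s) := u_n(s + t_n^*)$ and extract a complete limit $w$ as before; the shift $v_n(t) := w(t - t_n^*)$ is a complete trajectory by Definition~\ref{Dc}(2), so the assumption supplies $s_n \in [0, T_0]$ with $\ds(w_n(s_n), w(s_n)) \geq \varepsilon_0$. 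Passing to $s_n \to s^* \in [0,T_0]$, uniform $\dw$-convergence and $\ds$-continuity of $w$ give $w_n(s_n) \to w(s^*)$ weakly, and the exceptional-set argument of Step~2 applied at $t = s^*$ (with auxiliary time $t_0 < s^*$ avoiding the countable union of null sets from A2 for all $w_n$ and from A3) yields $\|w_n(s_n)\| \to \|w(s^*)\|$, hence $w_n(s_n) \to w(s^*)$ strongly by Radon--Riesz; combined with $w(s_n) \to w(s^*)$ strongly this contradicts $\ds(w_n(s_n), w(s_n)) \geq \varepsilon_0$. The \emph{main obstacle} throughout is precisely this uniform-in-$n$ upgrade from weak to strong convergence: A2 only provides the energy bound starting from a.e.\ $t_0$ and A3 only provides a.e.\ strong convergence in time, so one must exploit the fact that a countable union of null sets is still null to select a single auxiliary $t_0$ that works for every $n$ simultaneously; replacing a fixed evaluation time by a convergent sequence $s_n \to s^*$ in the tracking step is handled by the same trick because the auxiliary $t_0 < s^*$ can be chosen independently of $n$.
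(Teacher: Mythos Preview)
The paper does not prove this statement; immediately before the theorem it says ``The following results were proved in \cite{C5}'' and then uses the result as a black box to deduce the main theorem. There is therefore no proof in the present paper to compare your proposal against.

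For what it is worth, your sketch is a faithful reconstruction of the argument in \cite{C5}: the $\omega$-limit characterization of $\Aw$ as a nested intersection of weak closures, the shift-and-diagonal extraction powered by A1 to produce complete trajectories through limit points, and the weak-to-strong upgrade at a fixed evaluation time via A2, A3, strong continuity of the complete limit, and the Radon--Riesz property are exactly the ingredients used there. Your identification of the ``main obstacle'' --- selecting a single auxiliary time $t_0$ that simultaneously avoids the countably many null sets coming from A2 (one per $n$) and from A3 --- is indeed the crux of the upgrade step in \cite{C5}, and your handling of the moving evaluation time $s_n \to s^*$ in the tracking argument by choosing $t_0 < s^*$ independently of $n$ is correct.
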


Now let us introduce some notations and functional setting.
Recall that $(\cdot,\cdot)$ and $\|\cdot\|_2$ are the $L^2$-inner product and the
corresponding $L^2$-norm.
Also, define the strong and weak distances by
\[
\ds(u,v):=\|u-v\|_2, \qquad
\dw(u,v)= \sum_{\nu \in \mathbb{Z}^3} \frac{1}{2^{|\nu|}}
\frac{|\hat{u}_{\nu}-\hat{v}_{\nu}|}{1 + |\hat{u}_{\nu}-\hat{v}_{\nu}|},
\qquad u,v \in L^2,
\]
where $\hat{u}_{\nu}$ and $\hat{v}_{\nu}$ are Fourier coefficients of $u$
and $v$ respectively.

Recall that the force $f$ has zero mean and $f \in L^{p}$ for some $p>2$. Thanks to the energy equality (Theorem~\ref{basic-energy}), 
there exists an absorbing ball for the 2D SQG
\[
X= \{\theta\in L^2(\mathbb{T}^2): \|\th\|_2 \leq R\},
\]
where $R$ as any number larger than $\|f\|_{H^{-1/2}(\mathbb{T}^2)} \nu^{-1} \sqrt{L/(2\pi)}$, where $L$ is the length of the periodic box.
Then for any bounded set $B \subset L^2$
there exists a time $t_0$, such that
\[
\th(t) \in X, \qquad \forall t\geq t_0,
\]
for every viscosity solution $\theta(t)$ with the initial data $\theta(0) \in B$.

Consider an evolutionary system for which
a family of trajectories consists
of all viscosity solutions of the SQG
in $X$. More precisely, define
\[
\begin{split}
\Ec([T,\infty)) := \{&\th(\cdot): \theta(\cdot)
\mbox{ is a viscosity solution of \eqref{QG} on } [T,\infty)\\
&\mbox{and } \th(t) \in X \ \forall t \in [T,\infty)\},
\qquad T \in \mathbb{R},
\end{split}
\]
\[
\begin{split}
\Ec((\infty,\infty)) := \{&\th(\cdot): \theta(\cdot)
\mbox{ is a viscosity solution of \eqref{QG} on } (-\infty,\infty)\\
&\mbox{and } \th(t) \in X \ \forall t \in (-\infty,\infty)\}.
\end{split}
\]

Clearly, the properties 1--4 of $\Ec$ hold.
Therefore, thanks to Theorem~\ref{thm:Attractor}, the weak global attractor $\Aw$
for this evolutionary system exists. Moreover, we have the following.
\begin{Lemma} \label{l:convergenceofLH}
Let $\th_n(t)$ be a sequence of viscosity solutions of the SQG,
such that $\th_n(t) \in X$ for all $t\geq t_0$. Then 
there exists a subsequence $\th_{n_j}$ of $\th_n$ that converges
in $C([t_0, T]; \Xw)$ to some viscosity solution $\th(t)$. 
\end{Lemma}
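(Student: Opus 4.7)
The plan is a three-step compactness argument: extract a subsequence converging in the weak topology by an Arzel\`a-Ascoli argument, identify the limit as a weak solution via Aubin-Lions compactness, and then upgrade the limit to a viscosity solution by diagonal extraction.

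First, since $\theta_n(t) \in X$ for $t \geq t_0$, the family is uniformly bounded in $L^\infty([t_0,\infty); L^2)$, and the energy equality of Theorem~\ref{basic-energy} provides a uniform bound in $L^2([t_0,T]; H^{1/2})$ for every $T > t_0$. Testing the distributional formulation of \eqref{QG} against a trigonometric test function $\phi_\nu(x)=e^{i\nu\cdot x}$ on $[s,t]$, the nonlinear term contributes $\lesssim R^2 |\nu|\,|t-s|$, the dissipative term contributes $\lesssim |\nu|^{1/2}|t-s|^{1/2}$ (by Cauchy-Schwarz in time and the uniform $L^2(H^{1/2})$ bound), and the forcing contributes $\lesssim \|f\|_p\,|t-s|$. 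Thus each Fourier coefficient of $\theta_n(t)$ is uniformly H\"older continuous in $t$, and summing with the exponentially decaying weights in the definition of $\dw$ yields equicontinuity of $\{\theta_n\}$ in $C([t_0,T]; \Xw)$. Since $\Xw$ is compact, Arzel\`a-Ascoli extracts a subsequence converging in $C([t_0,T]; \Xw)$; a standard diagonal argument over $T \in \mathbb{N}$ provides convergence in $C([t_0,\infty); \Xw)$ to some limit $\theta$.

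Second, to identify $\theta$ as a weak solution, I must pass to the limit in the nonlinear term, for which weak-$L^2$ convergence alone is insufficient. Using the uniform bound on $u_n = R^\perp \theta_n$ in $L^\infty(L^2)$, the 2D Sobolev embedding $H^{1/2} \hookrightarrow L^4$, and the equation itself, one derives a uniform bound on $\partial_t \theta_n$ in $L^2([t_0,T]; H^{-s})$ for a suitable $s>0$. Combined with the $L^2(H^{1/2})$ bound, the Aubin-Lions lemma produces, along a further subsequence, strong convergence $\theta_n \to \theta$ in $L^2([t_0,T]; L^2)$. Since $R^\perp$ is bounded on $L^2$, we also have $u_n \to R^\perp \theta$ strongly in $L^2(L^2)$, so $u_n \theta_n \to (R^\perp\theta)\theta$ in $L^1([t_0,T]\times\mathbb{T}^2)$. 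All the linear terms pass by weak convergence, so $\theta$ solves \eqref{QG} in the distributional sense.

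Third, to upgrade $\theta$ to a viscosity solution, I invoke the definition: for each $n$ there exist $\varepsilon_n^{(k)} \to 0$ as $k\to\infty$ and functions $\theta_n^{(k)}$ solving \eqref{VQG} with viscosity $\varepsilon_n^{(k)}$ such that $\theta_n^{(k)} \to \theta_n$ in $C_\mathrm{w}([t_0,T]; L^2)$. A diagonal choice of $k=k(n)$ satisfying $\varepsilon_n^{(k(n))} \leq 1/n$ and $\sup_{t\in[t_0,T]}\dw(\theta_n^{(k(n))}(t),\theta_n(t))<1/n$ yields a sequence of solutions of \eqref{VQG} with vanishing viscosity that converges to $\theta$ in $C([t_0,T]; \Xw)$, so $\theta$ is a viscosity solution. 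The main technical obstacle is Step~2: without the $H^{1/2}$ regularity extracted from the energy equality and the corresponding Aubin-Lions compactness, there is no way to handle the quadratic nonlinearity $(u_n\theta_n,\nabla\phi)$ in the limit.
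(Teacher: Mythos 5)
Your argument is correct, and it is worth noting that the paper itself states Lemma~\ref{l:convergenceofLH} without proof, treating it as the standard compactness step behind the construction of viscosity solutions (cf.\ \cite{CC} and the analogous 3D NSE statements in \cite{CF}); your write-up supplies exactly that standard argument. Step 1 is sound: the absorbing-ball bound plus the energy (in)equality give the uniform $L^\infty_t L^2$ and $L^2_t H^{1/2}$ bounds, the mode-by-mode H\"older estimates are right (nonlinear term $\lesssim R^2|\nu||t-s|$ after writing $u\cdot\nabla\theta=\nabla\cdot(u\theta)$, dissipation $\lesssim |\nu|^{1/2}|t-s|^{1/2}$ by Cauchy--Schwarz), and summing against the weights $2^{-|\nu|}$ with a tail truncation gives $\dw$-equicontinuity, so Arzel\`a--Ascoli on the compact metric space $(X,\dw)$ applies. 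In Step 2 your claimed bound on $\partial_t\theta_n$ is justified most cleanly by $\|u_n\theta_n\|_{L^{4/3}}\leq\|u_n\|_2\|\theta_n\|_4\lesssim R\,\|\theta_n\|_{H^{1/2}}$, which puts $\nabla\cdot(u_n\theta_n)$ in $L^2_t H^{-3/2}$ uniformly (an $L^1_t H^{-1}$ bound would also do for the Aubin--Lions--Simon lemma), and then the strong $L^2_tL^2$ convergence lets you pass to the limit in $(u_n\theta_n,\nabla\phi)$, so the limit is a weak solution. Step 3 is the right way to preserve the viscosity-solution property: the diagonal choice $k(n)$ works because the approximants $\theta_n^{(k)}$ are uniformly bounded in $L^\infty_tL^2$ by the energy estimate for \eqref{VQG}, so $\dw$-convergence is equivalent to uniform weak convergence there, and the triangle inequality in the sup-$\dw$ metric gives $\theta_n^{(k(n))}\to\theta$ in $C([t_0,T];\Xw)$ with $\e_n^{(k(n))}\to0$. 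Combined with Step 2, the limit satisfies both requirements of the paper's definition of a viscosity solution, which is precisely what the lemma asserts.
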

\begin{Lemma} \label{l:compact}
The evolutionary system $\Ec$ of the SQG
satisfies A1, A2, and A3.
\end{Lemma}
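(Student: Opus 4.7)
I would verify the three properties $A1$, $A2$, $A3$ in turn, using as main ingredients Lemma~\ref{l:convergenceofLH} (weak-continuous compactness of viscosity solutions on bounded time intervals), the energy equality of Theorem~\ref{basic-energy}, and the fact that $X$ is a weakly closed, $L^2$-bounded absorbing ball.

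For $A1$, I need to show that $\Ec([0,\infty))$ is compact in the Fr\'echet-type space $C([0,\infty); X_w)$. Given any sequence $\theta_n \in \Ec([0,\infty))$, Lemma~\ref{l:convergenceofLH} applied on $[0,T]$ for each $T \in \mathbb{N}$ yields a subsequence converging in $C([0,T]; X_w)$ to some viscosity solution. A standard Cantor diagonalization produces a single subsequence converging in $C([0,\infty); X_w)$ to a limit $\theta$, which is a viscosity solution on each $[0,T]$ (hence on $[0,\infty)$); weak lower semi-continuity of $\|\cdot\|_2$ keeps $\theta(t) \in X$.

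For $A2$, I apply Theorem~\ref{basic-energy} between $t_0$ and $t$ to any $\theta \in \Ec([0,\infty))$. Dropping the non-negative dissipation term,
\[
\|\theta(t)\|_2^2 \leq \|\theta(t_0)\|_2^2 + 2\int_{t_0}^t (f,\theta)\, d\tau \leq \|\theta(t_0)\|_2^2 + 2R\|f\|_2 (t-t_0),
\]
since $\|\theta(\tau)\|_2\leq R$ and $f\in L^2(\mathbb T^2)$ (as $L^p\subset L^2$ on the torus when $p>2$). For given $\epsilon>0$, taking $\delta = \epsilon^2/(2R\|f\|_2)$ and using the elementary bound $\sqrt{a+b}\leq\sqrt{a}+\sqrt{b}$ yields $\|\theta(t)\|_2\leq\|\theta(t_0)\|_2+\epsilon$; thanks to the equality (not just a.e.\ inequality) in Theorem~\ref{basic-energy}, this holds for every $t_0 \in (t-\delta,t)\cap[0,\infty)$, not only a.e.

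For $A3$, let $\theta_n \to \theta$ in $C([0,T]; X_w)$, so $\theta_n(t) \rightharpoonup \theta(t)$ weakly in $L^2$ for every $t$. Since weak convergence together with convergence of norms is equivalent to strong convergence in a Hilbert space, it suffices to prove $\|\theta_n(t)\|_2 \to \|\theta(t)\|_2$ for a.e.\ $t \in [0,T]$. By the energy equality, the auxiliary functions $g_n(t) := \|\theta_n(t)\|_2^2 - 2\int_0^t(f,\theta_n)\,d\tau$ are non-increasing and uniformly bounded, so Helly's selection theorem extracts a subsequence along which $g_n \to \bar g$ pointwise a.e.\ for a non-increasing $\bar g$. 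Using $f\in L^2$, weak convergence at each time, and the uniform bound $\|\theta_n(\tau)\|_2\leq R$, dominated convergence gives $\int_0^t (f,\theta_n)\,d\tau \to \int_0^t(f,\theta)\,d\tau$ pointwise, so $\|\theta_n(t)\|_2^2 \to \phi(t) := \bar g(t)+2\int_0^t(f,\theta)\,d\tau$ a.e., with $\|\theta(t)\|_2^2 \leq \phi(t)$ by weak LSC. To close the argument I would subtract the energy equality for $\theta$ from the limit of the energy equality for $\theta_n$ between two Lebesgue points $t_0<t$, which shows that $\phi-\|\theta\|_2^2$ is non-increasing on its domain of a.e.\ definition; combined with weak LSC of the dissipation $\int_0^T\|\Lambda^{1/2}\theta\|_2^2\,dt\leq\liminf_n\int_0^T\|\Lambda^{1/2}\theta_n\|_2^2\,dt$ and the fact that $\int_0^T(\phi(t)-\|\theta(t)\|_2^2)\,dt\leq 0$ that follows from weak convergence in $L^2(0,T;L^2)$, the gap must vanish a.e.\ on $(0,T]$, following the abstract framework of \cite{C5}.

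\textbf{Main obstacle.} The delicate point is the final step of $A3$: promoting the inequality $\|\theta(t)\|_2^2\leq\phi(t)$ to equality almost everywhere. This is where the energy \emph{equality} (rather than mere inequality) is essential, since it rules out any persistent loss in the passage to the limit and forces the Helly-limit $\bar g$ to coincide with $g$ away from the single initial time $t=0$, where strong convergence cannot be expected from weak convergence in $C([0,T];X_w)$ alone.
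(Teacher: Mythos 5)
Your verification of A1 and A2 is correct and follows the paper's proof essentially verbatim (diagonalization via Lemma~\ref{l:convergenceofLH} for A1; the energy equality of Theorem~\ref{basic-energy} together with $\|\theta\|_2\le R$ and $f\in L^2$ for A2 — your handling of the square root is in fact slightly more careful than the paper's).

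The gap is in A3, at exactly the step you flag as delicate. You need $\int_0^T\bigl(\phi(t)-\|\theta(t)\|_2^2\bigr)\,dt\le 0$ and assert that it ``follows from weak convergence in $L^2(0,T;L^2)$''. It does not: weak convergence and lower semicontinuity of the norm give precisely the opposite inequality, $\int_0^T\|\theta\|_2^2\,dt\le\liminf_n\int_0^T\|\theta_n\|_2^2\,dt=\int_0^T\phi(t)\,dt$ (the last equality by dominated convergence, since $\|\theta_n(t)\|_2\le R$). The inequality you need is equivalent to strong convergence $\theta_n\to\theta$ in $L^2(0,T;L^2)$, which is the compactness input your argument never establishes. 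Without it, the monotonicity of the gap $\phi-\|\theta\|_2^2$ (which you do correctly obtain from weak lower semicontinuity of the dissipation) cannot close the argument: a non-negative, non-increasing gap may equal a positive constant for all $t$, so nothing forces it to vanish on $(0,T]$. The missing ingredient — and the route the paper takes — is the uniform bound of $\{\theta_n\}$ in $L^2([0,T];H^{1/2})$ coming from the energy equality: combined with $\theta_n\to\theta$ in $C([0,T];X_{\mathrm{w}})$ (which, given the uniform $L^2$ bound, yields convergence in $H^{-1/2}$ at each time), the interpolation $\|\theta_n-\theta\|_2^2\lesssim\|\theta_n-\theta\|_{H^{1/2}}\,\|\theta_n-\theta\|_{H^{-1/2}}$ gives $\int_0^T\|\theta_n-\theta\|_2^2\,dt\to0$, hence $\|\theta_n(t)\|_2\to\|\theta(t)\|_2$ a.e.\ and A3 follows directly. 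Once this compactness step is supplied your Helly/monotone-gap machinery becomes unnecessary; without it, the proof of A3 is incomplete.
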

\begin{proof}
First note that $\Ec([0,\infty)) \subset C([0,\infty);\Xw)$. Now take any sequence
$\th_n \in \Ec([0,\infty))$, $n=1,2, \dots$.
Thanks to Lemma~\ref{l:convergenceofLH}, there exists
a subsequence, still denoted by  $\th_n$, that converges
to some $\th^{1} \in \Ec([0,\infty))$ in $C([0, 1];L^2_w)$ as $n \to \infty$.
Passing to a subsequence and dropping a subindex once more, we obtain that
$\th_n \to \theta^2$ in $C([0, 2];L^2_w)$ as $n \to \infty$ for some 
$\th^{2} \in \Ec([0,\infty))$.
Note that $\th^1(t)=\th^2(t)$ on $[0, 1]$.
Continuing
this diagonalization process, we obtain a subsequence $\th_{n_j}$
of $\th_n$ that converges
to some $\th \in \Ec([0,\infty))$ in $C([0, \infty);L^2_w)$ as $n_j \to \infty$.
Therefore, A1 holds.

Now, given $\epsilon>0$, let $\delta=\epsilon/(2 \|f\|_2R)$. Take any $\th \in \Ec([0,\infty))$
and $t>0$. Due to Theorem~\ref{basic-energy}, $\th(t)$ satisfies the energy equality
\[
\|\th(t)\|_2^2 + 2\nu \int_{t_0}^t \|\La^{1/2}\th(s)\|_2^2 \, ds =
\|\th(t_0)\|_2^2 + 2\int_{t_0}^t (f, \th(s)) \, ds,
\]
for all $0 \leq t_0 \leq t$. Hence,
\[
\begin{split}
\|\th(t)\|_2^2 &\leq \|\th(t_0)\|_2^2 + 2(t-t_0) \|f\|_2 R\\
&\leq \|\th(t_0)\|_2 + \epsilon,
\end{split}
\]
for all $t_0\geq 0$ such that $t_0 \in (t-\delta,t)$. Therefore, A2 holds.

Let now $\th_n \in \Ec([0,\infty))$ be such that $\th_n \to \th\in\Ec([0,\infty))$ in
$C([0, T];\Xw)$ as $n\to \infty$ for some
$T>0$. Thanks to the energy equality, the sequence $\{\th_n\}$
is bounded in $L^2([0,T];H^{\frac12})$. Hence,
\[
\int_{0}^T \|\th_n(s)-\th(s)\|_2^2 \, ds \to 0, \qquad \mbox{as}
\qquad  n \to \infty.
\]
In particular, $\|\th_n(t)\|_2 \to \|\th(t)\|_2$ as $n \to \infty$ a.e. on $[0,T]$,
i.e., A3 holds.
\end{proof}

Note that every solution of the SQG is strongly continuous due to Theorem~\ref{basic-energy}.
Now Theorem~\ref{thm:Attractor} yields the following.
\begin{Theorem}
The SQG equation possesses  a strongly compact strong global attractor $\A$,
\[
\A=\{ \theta_0: \ \theta_0=\theta(0) \mbox{ for some } \theta \in \Ec((-\infty, \infty))\}.
\] 
In addition, for any $\epsilon >0$ and $T>0$, there exists $t_0$,
such that for any $t^*>t_0$,  every viscosity solution $\theta \in \Ec([0,\infty))$
satisfies
\[
\ds(\theta(t), v(t)) < \epsilon, \qquad \forall t\in [t^*,t^*+T],
\]
for some complete trajectory $v \in \Ec((-\infty,\infty))$.
\end{Theorem}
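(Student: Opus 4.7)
The plan is to verify the hypotheses of the abstract Theorem~\ref{thm:Attractor} for the evolutionary system $\Ec$ of viscosity solutions constrained to the $L^2$-absorbing ball $X$, and then let that theorem do the work. Four ingredients are required: properties A1, A2, A3, and strong continuity of every complete trajectory. The last ingredient --- the one that has historically obstructed such attractor constructions --- is where the structural results of Sections~\ref{sec:infty} and~\ref{sec:con} come in.

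The setup identified in Section~\ref{sec:att} is immediate: $L^2(\mathbb T^2)$ is a Hilbert space and hence trivially has the Radon-Riesz property; $X$ is a closed $L^2$-ball and hence $\dw$-compact; $\ds$ is the strong $L^2$ metric while $\dw$ induces the weak $L^2$ topology on $X$, and the two are compatible in the sense required by the framework. Properties A1, A2, and A3 have just been verified in Lemma~\ref{l:compact}. It remains only to show that every $\theta\in\Ec((-\infty,\infty))$ is strongly continuous into $L^2$. This is exactly where Theorem~\ref{basic-energy} enters: the energy equality forces $t\mapsto\|\theta(t)\|_2$ to be continuous, and any viscosity solution is $L^2$-weakly continuous by definition; the Radon-Riesz property then upgrades these two facts to strong continuity.

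With all four hypotheses in hand, Theorem~\ref{thm:Attractor} yields the strong global attractor $\As$ as a strongly compact set coinciding with the weak global attractor $\Aw$, so $\A := \As = \Aw$ is well defined. The characterization $\Aw=\{u_0:u_0=u(0) \mbox{ for some } u\in\Ec((-\infty,\infty))\}$ from the first part of Theorem~\ref{thm:Attractor} transfers verbatim to $\A$ and gives the claimed formula. The strong uniform tracking property in the second part of Theorem~\ref{thm:Attractor} is precisely the $\epsilon$-$T$ statement in the second half of the theorem, with the tracking trajectory $v$ ranging over $\Ec((-\infty,\infty))$.

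The hard step is not really in this final section --- it is the production of strong continuity, which rests on the $L^\infty$ bound of Lemma~\ref{Linfty} and the Littlewood-Paley flux estimate culminating in Theorem~\ref{basic-energy}. Once these are secured, the global attractor result is a mechanical application of the Cheskidov framework, and, crucially, no compactness of an absorbing set in $L^2$ is ever needed: the $L^\infty$-absorbing set fails to be $L^2$-compact, so the classical route is unavailable, and the continuity-based argument of \cite{C5} is genuinely essential here.
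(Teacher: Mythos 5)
Your proposal is correct and follows the paper's own route exactly: verify A1--A3 (Lemma~\ref{l:compact}), obtain strong continuity of every trajectory from the energy equality of Theorem~\ref{basic-energy} (the paper leaves the Radon-Riesz upgrade from weak continuity plus norm continuity implicit, which you usefully spell out), and then invoke Theorem~\ref{thm:Attractor} to get the strongly compact strong attractor, its characterization via complete trajectories, and the uniform tracking property. No gaps.
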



{}

\end{document}